\theoremstyle{plain}
\newtheorem{theorem}{Theorem}
\newtheorem{lemma}[theorem]{Lemma}
\newtheorem{corollary}[theorem]{Corollary}
\newtheorem{proposition}[theorem]{Proposition}
\theoremstyle{definition}
\newtheorem{conjecture}[theorem]{Conjecture}
\newtheorem{open}[theorem]{Open Problem}
\theoremstyle{remark}
\newtheorem{remark}[theorem]{Remark}
\title{Compatible split systems on a multiset}
\author{Vincent Moulton\\
\small School of Computing Sciences\\[-0.8ex]
\small University of East Anglia\\[-0.8ex] 
\small Norwich, NR4 7TJ, UK\\
\small\tt v.moulton@uea.ac.uk\\
\and
Guillaume E. Scholz\\
\small Bioinformatics Group, Department of Computer Science,\\[-0.8ex]
\small Interdisciplinary Center for Bioinformatics\\[-0.8ex]
\small Universit\"at Leipzig\\[-0.8ex]
\small D-04107 Leipzig, Germany\\
\small\tt guillaume@bioinf.uni-leipzig.de}
\begin{document}

\maketitle

\begin{abstract}
A split system on a multiset $\mathcal M$ is a set 
of bipartitions of $\mathcal M$.  
Such a split system $\mathfrak S$ is 
compatible if it can be represented 
by a tree in such a way that the vertices of the tree
are labelled by the elements in $\mathcal M$, the removal of each edge 
in the tree yields a bipartition in $\mathfrak S$ by taking
the labels of the two resulting components, and every 
bipartition in $\mathfrak S$ can be obtained from the tree in this way.
In this contribution,  we present a novel characterization for compatible 
split systems, and for split systems admitting a unique tree representation. 
In addition, we show that a conjecture on compatibility
stated in 2008 holds for some large classes of split systems.
\end{abstract}

\section{Introduction}\label{sec-prelim}

Let $\mathcal M$ be a multiset with underlying set $X$. For $x \in X$, we denote 
by $\mathcal M(x) \geq 1$ the multiplicity of $x$ in $\mathcal M$, 
and we put $\Delta(\mathcal M)=\sum_{x \in X} (\mathcal M(x)-1)$. 
To ease notation,  we sometimes write $a_1 a_2 \ldots a_n$ for a 
multiset $\{a_1, a_2, \ldots, a_n\}$, and if
an element $a_i$ has multiplicity $k > 1$, then we also denote this by writing $a_i^k$.
We denote by $\mathcal M^* \subseteq X$ the set of 
elements of $X$ with multiplicity $1$ in $\mathcal M$, that is, $\mathcal M^*=\{x \in X, \mathcal M(x)=1\}$. 
Similarly, for $A \subseteq \mathcal M$, we denote by $A^*$ the set of elements of $A$ 
with multiplicity $1$ in $\mathcal M$, that is, $A^*=A \cap \mathcal M^*$. 
Note that throughout this paper, all unions are multiset unions, unless stated otherwise.

A \emph{split} (or \emph{bipartition}) $S$ of $\mathcal M$ is a pair $\{A,B\}$ 
such that $A$, $B$ are nonempty sub(multi)sets of $\mathcal M$, and the 
multiset union $A \cup B$ is precisely $\mathcal M$. We usually 
write $S=A|B$ (or $S=B|A$, as the roles of $A$ and $B$ are symmetric). 
When the set $\mathcal M$ is clear from the context, we also sometimes 
write $A|\overline{A}$ instead of $A|B$, where $\overline A=\mathcal M-\{A\}$.  
A (multi)set $\mathfrak S$ of splits $A|B$ of $\mathcal M$ is 
called a \emph{split system} on $\mathcal M$.

A \emph{labeled tree} is a pair $\mathcal T=(T,\lambda)$ such 
that $T$ is a tree (that is, an undirected, acyclic graph), and $\lambda$ is 
a map from the vertex set $V(T)$ of $T$ to $\mathcal P(\mathcal M)$, the power set of $\mathcal M$. 
An \emph{$\mathcal M$-tree} is a labeled tree $(T,\lambda)$ satisfying the following two properties:
\begin{itemize}
\item[(M1)] The multiset union $\cup_{v \in V(T)} \lambda(v)$ is $\mathcal M$.
\item[(M2)] All vertices $v$ of $T$ of degree $1$ or $2$ satisfy $\lambda(v) \neq \emptyset$.
\end{itemize}
For $v \in V(T)$, we call $\lambda(v)$ the label of $v$, and we say that an 
element $a \in \mathcal M$ (\emph{resp.} a subset $A \subseteq \mathcal M$) \emph{labels} $v$ 
if $a \in \lambda(v)$ (\emph{resp.} $A \subseteq \lambda(v)$). Abusing terminology, 
we also sometimes call a vertex (\emph{resp. } edge) of $T$ a vertex (\emph{resp} edge) 
of $\mathcal T$. For example, the labeled tree depicted in Figure~\ref{fig-intro} is 
an $\mathcal M$-tree for $\mathcal M=\{a^2,b^2,c^2,x,y\}$. We say that two $\mathcal M$-trees $\mathcal T=(T,\lambda)$ and $\mathcal T'=(T',\lambda')$ are \emph{isomorphic} if there exists a graph isomorphism $\phi: V(T) \to V(T')$ such that $\lambda'(\phi(v))=\lambda(v)$ for all $v \in V(T)$.

\begin{figure}[h]
\begin{center}
\includegraphics[scale=1]{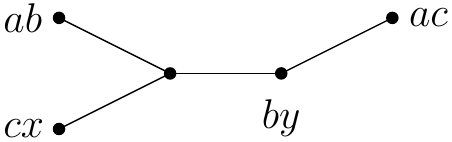}
\caption{A $\mathcal M$-tree representing the split 
system $\{ab|\overline{ab}, ac|\overline{ac}, cx|\overline{cx}, abcx|abcy\}$ on $\mathcal M=\{a^2,b^2,c^2,x,y\}$. 
The labels of each vertex are indicated next to the vertex. 
The multiset union of all labels is $\mathcal M$, so (M1) is 
satisfied. Moreover, all  vertices of degree $1$ or $2$ have a 
nonempty label, so (M2) is satisfied.}
\label{fig-intro}
\end{center}
\end{figure}

Let $\mathcal T=(T,\lambda)$ be a $\mathcal M$-tree. Since $T$ is a tree, the 
removal of an edge $e$ from $T$ results in a graph with exactly two connected 
components $T_1$ and $T_2$. Putting $A=\bigcup_{v \in V(T_1)}\lambda(v)$ 
and $B=\bigcup_{v \in V(T_2)}\lambda(v)$, it follows from (M1) that $A|B$ is a split of $\mathcal M$, 
which we call the split \emph{induced} by $e$. The (multiset) union $\mathfrak S(\mathcal T)$ of 
splits induced by all edges of $T$ is called the split system \emph{represented} by $\mathcal T$.
We say that a split system $\mathfrak S$ on $\mathcal M$
is \emph{compatible} if there exists a $\mathcal M$-tree $\mathcal T=(T,\lambda)$ 
representing $\mathfrak S$, that is, satisfying $\mathfrak S=\mathfrak S(\mathcal T)$. 
For example, the split system $\{ab|\overline{ab}, ac|\overline{ac}, cx|\overline{cx}, abcx|abcy\}$ on $\mathcal M=a^2b^2c^2xy$ 
is compatible, and admits the $\mathcal M$-tree depicted in Figure~\ref{fig-intro} as a representation.

It is not difficult to see that if 
a split system $\mathfrak S$ is compatible, then it
is \emph{pairwise compatible}, that is,  for any pair of splits $S_1, S_2 \in \mathfrak S$ 
there is some $A \in S_1$ and $B \in S_2$ such that $A \cap B = \emptyset$.
In 1971, Buneman proved the following corner-stone result concerning compatibility \cite{B71}:

\begin{theorem}[\cite{B71}]\label{RefBu}
Let $\mathfrak S$ be a set of splits of a set $X$. Then $\mathfrak S$ 
is compatible if and only if $\mathfrak S$ is pairwise compatible.
Moreover,  if $\mathfrak S$ is compatible, then there exists a 
unique (up to isomorphism) $X$-tree $\mathcal T$ representing $\mathfrak S$. 
\end{theorem}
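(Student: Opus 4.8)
\emph{The forward implication} is the easy one, as already indicated. Suppose $\mathcal{T}=(T,\lambda)$ represents $\mathfrak{S}$ and let $S_1,S_2\in\mathfrak{S}$. If $S_1=S_2$ its two distinct parts are disjoint and we are done; otherwise $S_1$ and $S_2$ are induced by distinct edges $e_1,e_2$ of $T$, and deleting both edges leaves exactly three connected components: the side $P_1$ of $e_1$ not containing $e_2$, the side $P_2$ of $e_2$ not containing $e_1$, and a middle component. Then the part of $S_1$ equal to the labels of $P_1$ and the part of $S_2$ equal to the labels of $P_2$ are disjoint, so $\mathfrak{S}$ is pairwise compatible.

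\emph{The converse, together with uniqueness,} I would prove by induction on $|\mathfrak{S}|$. If $\mathfrak{S}=\emptyset$, the $X$-tree with a single vertex labelled $X$ and no edges represents $\mathfrak{S}$, and it is clearly the only such $X$-tree. For the inductive step, fix $S_0=A_0|B_0\in\mathfrak{S}$ and put $\mathfrak{S}'=\mathfrak{S}\setminus\{S_0\}$. A subsystem of a pairwise compatible system is again pairwise compatible, so by the induction hypothesis there is a unique $X$-tree $\mathcal{T}'=(T',\lambda')$ with $\mathfrak{S}(\mathcal{T}')=\mathfrak{S}'$, and the remaining task is to insert $S_0$ into $\mathcal{T}'$ in an essentially unique way.

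\emph{The crux.} For nonempty $Y\subseteq X$, let $T'[Y]$ be the minimal subtree of $T'$ containing every vertex whose label meets $Y$; since the labels of an $X$-tree partition $X$, each element of $Y$ lies in a label of $T'[Y]$, and by minimality every leaf of $T'[Y]$ has a label meeting $Y$. The key claim, and the only place pairwise compatibility is really used, is that $T'[A_0]$ and $T'[B_0]$ share exactly one vertex $v^*$. They share at least one: otherwise the edge of $T'$ on the path between $T'[A_0]$ and $T'[B_0]$ would have all of $A_0$ on one side and all of $B_0$ on the other, hence induce the split $A_0|B_0$, contradicting $S_0\notin\mathfrak{S}'$. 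They share at most one: two shared vertices would force a shared edge $e$ (a tree subgraph containing two vertices contains the path between them), and the split of $\mathcal{T}'$ induced by $e$ would then meet each of $A_0$ and $B_0$ on both of its sides, impossible for a split pairwise compatible with $S_0$. It follows that no component of $T'-v^*$ carries labels from both $A_0$ and $B_0$, and (by (M2)) each such component carries at least one nonempty label.

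\emph{Building $\mathcal{T}$, and uniqueness.} Given this, I would form $\mathcal{T}$ from $\mathcal{T}'$ by splitting $v^*$ into two adjacent vertices $v_A,v_B$ joined by a new edge $e^*$, reattaching to $v_A$ those edges formerly at $v^*$ whose far component carries $A_0$-labels and to $v_B$ the rest, and setting $\lambda(v_A)=\lambda'(v^*)\cap A_0$ and $\lambda(v_B)=\lambda'(v^*)\cap B_0$. A routine verification then shows that $\mathcal{T}$ is an $X$-tree — (M1) is immediate, and (M2) holds because if, say, $v_A$ has degree $1$ or $2$ then either $A_0\subseteq\lambda'(v^*)$ or $v^*$ is a leaf of $T'[A_0]$, so $\lambda(v_A)\neq\emptyset$ — and that $\mathfrak{S}(\mathcal{T})=\mathfrak{S}$, with $e^*$ inducing $S_0$ and every other edge inducing the split it already induced in $\mathcal{T}'$. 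For uniqueness, let $\mathcal{T}$ be any $X$-tree representing $\mathfrak{S}$; a standard argument from (M2) shows that distinct edges of an $X$-tree induce distinct splits (a split induced by two edges would force a region all of whose vertices are unlabelled, which (M2) forbids), so $S_0$ is induced by a unique edge $e^*$, and contracting $e^*$ and merging the labels of its ends yields an $X$-tree representing $\mathfrak{S}'$, which by induction is isomorphic to $\mathcal{T}'$. Chasing this isomorphism back forces the contracted vertex to be $v^*=T'[A_0]\cap T'[B_0]$ and the splitting of $v^*$ to be the one above, so $\mathcal{T}$ is isomorphic to the $X$-tree just constructed. I expect the crux step to be the main obstacle; once it is in hand, the construction and the uniqueness argument are bookkeeping with (M1) and (M2).
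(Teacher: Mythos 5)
This statement is Buneman's classical theorem, which the paper cites from \cite{B71} and does not prove, so there is no in-paper argument to compare yours against. Your proof is correct and follows the standard route for this result: the forward direction via the three-component decomposition after deleting two edges, and the converse plus uniqueness by induction, inserting one split at a time by splitting the unique vertex $v^*$ shared by the minimal subtrees $T'[A_0]$ and $T'[B_0]$. The crux step is handled properly — in particular, the ``at most one shared vertex'' argument is exactly where pairwise compatibility enters, and the ``at least one'' argument correctly uses that labels of an $X$-tree partition $X$ so that a separating edge would already induce $S_0$ in $\mathcal T'$. The two places you compress — that distinct edges of an $X$-tree induce distinct splits (the unlabelled middle region violates (M2) by a degree count), and the final ``chasing the isomorphism back'' to see that the contracted vertex must be $v^*$ and the reattachment is forced — are both routine to fill in as you claim: for the latter, one checks that every component of $T-\{v_A,v_B\}$ carries labels from only one of $A_0,B_0$, which pins the image of the contracted vertex to $T'[A_0]\cap T'[B_0]$. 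I find no gap.
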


However, as was remarked in \cite{G06},
the equivalence stated in the last theorem does not necessarily hold for 
split systems on multisets. For example, the split system $\mathfrak S=\{ab|acd,ac|abd,ad|abc\}$ 
on $\mathcal M=\{a^2,b,c,d\}$ is pairwise compatible, but it is not compatible. Moreover, 
it was also shown that the uniqueness condition in the theorem can also fail 
to hold for compatible split systems in general.

%Another consequence of Theorem~\ref{RefBu} is that if $\mathfrak S$ is a split 
%system on a set $X$ containing three or more splits, such that all proper subsets of $\mathfrak S$ are compatible, 
%then $\mathfrak S$ must be compatible. Again, it was shown in \cite{BGL03} that this 
%property is not necessarily satisfied for split systems on multisets. The 
%split system $\mathfrak S=\{ab|acd,ac|abd,ad|abc\}$ on $\mathcal M=\{a^2,b,c,d\}$, for 
%example, is not compatible, but all the proper subsets of $\mathfrak S$ are. 
Despite these issues, in \cite{HLMS08} it was shown that compatibility of a 
split system on a multiset $\mathcal M$ can in fact be characterised by taking into account the quantity $\Delta(\mathcal M)$:

\begin{theorem}[\cite{HLMS08}, Thm. 4.3]\label{ref43}
Let $\mathfrak S$ be a split system on a multiset $\mathcal M$. Then $\mathfrak S$ is 
compatible if and only if every submultiset of $\mathfrak S$ 
of size at most $\mathrm{max}\{2\Delta(\mathcal M),\Delta(\mathcal M)+2\}$ is compatible.
\end{theorem}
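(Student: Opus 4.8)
The plan is to prove the ``only if'' direction by a direct edge-contraction argument, and the ``if'' direction by contraposition, using induction on $\Delta(\mathcal M)$ with Buneman's theorem (Theorem~\ref{RefBu}) as the base case. For ``only if'', suppose $\mathcal T=(T,\lambda)$ is an $\mathcal M$-tree with $\mathfrak S(\mathcal T)=\mathfrak S$ and let $\mathfrak S'\subseteq\mathfrak S$. Contract in $T$, one at a time, enough edges so that for every split the number of surviving edges inducing it equals its multiplicity in $\mathfrak S'$, each contraction merging the labels of the two endpoints into their multiset union. A contraction changes neither the split induced by a surviving edge nor the overall multiset union of labels, so (M1) persists and the resulting labelled tree represents exactly $\mathfrak S'$; (M2) is then repaired by observing that a newly created degree-$2$ vertex with empty label has both incident edges inducing the same split, so one further contraction removes the defect. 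In particular every subsystem of $\mathfrak S$ of size at most $\max\{2\Delta(\mathcal M),\Delta(\mathcal M)+2\}$ is compatible.

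For ``if'' I would prove the contrapositive: if $\mathfrak S$ is not compatible, then some subsystem of size at most $N:=\max\{2\Delta(\mathcal M),\Delta(\mathcal M)+2\}$ is not compatible. If $\mathfrak S$ is not pairwise compatible, two of its splits form a non-compatible system of size $2\le N$ and we are done. So assume $\mathfrak S$ is pairwise compatible but not compatible, and induct on $\Delta(\mathcal M)$. When $\Delta(\mathcal M)=0$, $\mathcal M$ is a set and Theorem~\ref{RefBu} forces compatibility, so this situation cannot arise. For $\Delta(\mathcal M)\ge 1$, pick $a\in X$ with $\mathcal M(a)\ge 2$ and \emph{unfold} $a$: form $\mathcal M'$ from $\mathcal M$ by replacing the $\mathcal M(a)$ copies of $a$ with one copy of a fresh element $a'$ and $\mathcal M(a)-1$ copies of a fresh element $a''$, so that $\Delta(\mathcal M')=\Delta(\mathcal M)-1$; then, selecting for each split $A|B\in\mathfrak S$ the side onto which the unique copy $a'$ is placed (the remaining copies becoming $a''$), obtain a split system $\mathfrak S'$ on $\mathcal M'$.

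One transfer is now automatic: relabelling $a',a''$ back to $a$ turns any $\mathcal M'$-tree for a subsystem of $\mathfrak S'$ into an $\mathcal M$-tree for the corresponding subsystem of $\mathfrak S$, so non-compatibility transfers from $\mathcal M'$ back to $\mathcal M$; in particular $\mathfrak S'$ is not compatible, and the induction hypothesis yields a non-compatible subsystem $\mathfrak S'_0\subseteq\mathfrak S'$ with $|\mathfrak S'_0|\le N':=\max\{2(\Delta(\mathcal M)-1),(\Delta(\mathcal M)-1)+2\}$. The reverse transfer is the delicate one: turning an $\mathcal M$-tree for the pulled-back subsystem $\mathfrak S_0\subseteq\mathfrak S$ into an $\mathcal M'$-tree for $\mathfrak S'_0$ requires designating a single occurrence of $a$ in that tree as $a'$ so that it lies, for every edge, in the side-component prescribed by the chosen unfolding; by the Helly property for subtrees of a tree such a designation exists unless two prescribed components are disjoint, or their common intersection omits all occurrences of $a$ --- each a defect visible on only a bounded number of splits. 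The heart of the proof is thus to make the side-choices coherently (using pairwise compatibility of $\mathfrak S$, together with a tree for a maximal compatible subsystem, to keep them as tree-compatible as possible) and then to argue that $\mathfrak S_0$ is already non-compatible, or becomes so once the few splits witnessing a coherence defect are re-adjoined, yielding a non-compatible subsystem of $\mathfrak S$ of size at most $|\mathfrak S'_0|$ plus a small constant.

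A final case check closes the bound: the gap $N-N'$ equals $1$ when $\Delta(\mathcal M)\in\{1,2\}$ and $2$ when $\Delta(\mathcal M)\ge 3$, which is exactly the slack that the ``re-adjoining'' step needs. I expect this last point --- bounding how many splits must be re-adjoined, and in particular handling the extremal case $\Delta(\mathcal M)=1$ (where $N=3$ and the three-split example on $\{a^2,b,c,d\}$ already shows $3$ cannot be improved) --- to be the main obstacle of the argument.
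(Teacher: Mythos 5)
The paper itself does not prove this statement---it is quoted from \cite{HLMS08}---so there is no internal proof to compare against and I can only assess your argument on its own terms. Your ``only if'' direction is essentially correct: contracting the edges of a representing tree that correspond to discarded copies of splits preserves (M1) and the split induced by each surviving edge, and in fact (M2) can never fail after contraction (if every vertex in a contracted class carries an empty label, each has degree at least $3$ in the original tree, and a degree count shows the merged vertex retains at least $3$ surviving incident edges). So your ``repair'' step---which, as described, would actually change the multiset of represented splits by deleting a needed copy---is never invoked, and its inaccuracy is harmless.

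The ``if'' direction, however, contains a genuine gap, and you flag it yourself: the whole difficulty of the theorem is concentrated in the ``reverse transfer'' step, which you describe but do not carry out. Concretely, after unfolding $a$ into $a'$ and $a''$ you must either show that compatibility of the pulled-back subsystem $\mathfrak S_0\subseteq\mathfrak S$ forces compatibility of $\mathfrak S_0'$, or exhibit an incompatible subsystem of $\mathfrak S$ of size at most $|\mathfrak S_0'|+(N-N')$. That requires (a) a precise notion of the ``coherence defect'' obstructing the designation of one occurrence of $a$ as $a'$ in an arbitrary $\mathcal M$-tree for $\mathfrak S_0$, and (b) a proof that any such defect is witnessed by at most $N-N'\in\{1,2\}$ further splits whose re-adjunction destroys compatibility. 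Neither is supplied, and the second is not plausible as stated: the side-choices defining $\mathfrak S'$ are made globally, for every split of $\mathfrak S$, before the induction hands you $\mathfrak S_0'$, so nothing ties a failed designation to a bounded number of splits of $\mathfrak S_0$. Already in the smallest nontrivial case $\Delta(\mathcal M)=1$ your inductive step reduces to: given a pairwise compatible but incompatible system on a multiset with one doubled element, locate a third split completing an incompatible triple---and your argument gives no way to find it. As it stands this is a plan whose central lemma is missing, not a proof.
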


In addition, the authors of \cite{HLMS08}  stated the following conjecture:

\begin{conjecture}\label{mainconj}
Let $\mathfrak S$ be a split system on a multiset $\mathcal M$. Then, 
\item[($\bigstar$)] $\mathfrak S$ is compatible if and only if every submultiset of $\mathfrak S$ of 
size at most $\Delta(\mathcal M)+2$ is compatible.
\end{conjecture}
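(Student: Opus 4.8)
\medskip
\noindent\textbf{Towards a proof.}

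The forward implication needs no restriction on the size of the submultisets: if $\mathcal T$ is an $\mathcal M$-tree with $\mathfrak S(\mathcal T)=\mathfrak S$ and $\mathfrak S'\subseteq\mathfrak S$, then contracting, one at a time, the edges of $\mathcal T$ whose induced split is not retained in $\mathfrak S'$ yields an $\mathcal M$-tree representing $\mathfrak S'$ — a contraction only enlarges vertex labels and merges two components, hence preserves (M1) and (M2) and leaves all remaining induced splits unchanged. So every submultiset of a compatible split system is compatible, and only the converse has content.

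For the converse I would argue in terms of \emph{minimally incompatible} split systems, i.e.\ incompatible ones all of whose proper submultisets are compatible. Passing to a smallest incompatible submultiset shows that ($\bigstar$) is equivalent to the statement that every minimally incompatible split system on $\mathcal M$ has at most $\Delta(\mathcal M)+2$ splits, and, by the same token, Theorem~\ref{ref43} is equivalent to the bound $\max\{2\Delta(\mathcal M),\Delta(\mathcal M)+2\}$ for the size of such a system. Since $2\Delta(\mathcal M)\le\Delta(\mathcal M)+2$ exactly when $\Delta(\mathcal M)\le 2$, the conjecture already follows from Theorem~\ref{ref43} when $\Delta(\mathcal M)\le 2$ (the case $\Delta(\mathcal M)=0$ being Theorem~\ref{RefBu}), and the entire content is to prove, for $\Delta(\mathcal M)\ge 3$, that a minimally incompatible split system on $\mathcal M$ has at most $\Delta(\mathcal M)+2$ splits.

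My plan would be to induct on $\Delta(\mathcal M)$. Let $\mathfrak S$ be minimally incompatible on $\mathcal M$ with $\Delta(\mathcal M)\ge 3$ and $|\mathfrak S|=k$, and for each $S\in\mathfrak S$ let $\mathcal T_S$ be an $\mathcal M$-tree representing the compatible subsystem $\mathfrak S\setminus\{S\}$. Fix $x\in X$ with $\mathcal M(x)\ge 2$ and let $\mathcal M'$ be obtained from $\mathcal M$ by lowering $\mathcal M(x)$ by one, so $\Delta(\mathcal M')=\Delta(\mathcal M)-1$. The aim is to delete one copy of $x$ from the splits of $\mathfrak S$ — and a \emph{choice} has to be made whenever a split meets $x$ with the same multiplicity on both sides — so that the resulting split system on $\mathcal M'$ still contains a minimally incompatible subsystem of size at least $k-1$. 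Granting this, the inductive hypothesis gives $k-1\le\Delta(\mathcal M')+2=\Delta(\mathcal M)+1$, i.e.\ $k\le\Delta(\mathcal M)+2$, which is what we must show. The same goal can be phrased dually as a charging scheme assigning to all but at most two of the splits of $\mathfrak S$ a distinct one of the $\Delta(\mathcal M)=\sum_{x\in X}(\mathcal M(x)-1)$ ``surplus copies'' of repeated elements; the characterization of compatibility established earlier in the paper should be the tool for recognising which deletion (equivalently, which assignment) works.

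The step I expect to be the main obstacle is exactly finding the right deletion (or charging) in general. Different deletions of a copy of a repeated element behave very differently — already for the minimally incompatible system $\{ab|acd,ac|abd,ad|abc\}$ on $\{a^2,b,c,d\}$, some ways of removing a copy of $a$ leave an incompatible system while others produce a compatible one — so the choice must be made carefully, and controlling it seems to require a detailed analysis of how the subtrees spanned by the copies of each repeated element sit inside the trees $\mathcal T_S$, guided by the characterization. I expect this analysis to go through in full under natural structural hypotheses — for instance when $\mathcal M$ has only boundedly many elements of multiplicity greater than one, or for classes of split systems in which the repeated elements are separated by the splits in a controlled way — giving the conjecture for large classes of split systems, while the general case seems to need a further idea and would be left open.
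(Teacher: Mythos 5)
The statement you are attacking is stated in the paper as a \emph{conjecture}, and the paper does not prove it: it only establishes ($\bigstar$) under extra hypotheses (Theorem~\ref{tmd2}, Corollary~\ref{cor-thin}, Theorem~\ref{equal}) and a weakened version for 2,3-split systems. So there is no full proof to compare yours against, and your proposal, which openly leaves the general case unresolved, is consistent with the state of the art. What you do establish is sound: the forward direction by edge contraction is correct (contraction can only merge labels, and a degree-1 or degree-2 vertex created by contracting an edge inherits a nonempty label from at least one endpoint, so (M1) and (M2) survive); the reduction of ($\bigstar$) to the claim that a minimally incompatible split system has at most $\Delta(\mathcal M)+2$ splits is exactly the paper's framing (it is the whole content of the proof of Theorem~\ref{tmd2} given Proposition~\ref{prdel}); and your observation that Theorem~\ref{ref43} already settles $\Delta(\mathcal M)\le 2$ is correct, since $\max\{2\Delta,\Delta+2\}=\Delta+2$ there.

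The genuine gap is the induction step, and it is worth being precise about how your plan diverges from the paper's partial argument. You propose to induct on $\Delta(\mathcal M)$ by deleting a single surplus copy of a repeated element $x$ and showing the resulting system on $\mathcal M'$ still contains a minimally incompatible subsystem of size at least $k-1$; you give no mechanism for choosing which copy to delete, and you yourself exhibit why the choice is delicate. The paper's Proposition~\ref{prdel} instead inducts on $k$ and performs a coarser surgery: it takes a \emph{superterminal} set $A_0$ (a terminal set whose vertex in $\Gamma(\mathfrak S)$ has out-degree $|\mathfrak S|-1$), collapses $A_0$ to a fresh element $a_0$ in every other split, and deletes the split $A_0|\overline{A_0}$; Lemma~\ref{lmcol} guarantees $A_0$ contains a repeated element, so $\Delta$ drops, and the superterminal property is what makes the collapsed system provably still minimally incompatible. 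That hypothesis ($\mathfrak S_t^{\times}\neq\emptyset$ for all large subsets) is precisely what cannot yet be removed — the paper records this as an open problem — so your ``natural structural hypotheses'' would need to be sharpened to something like the superterminal condition or thinness before your induction closes. As written, the step ``so that the resulting split system on $\mathcal M'$ still contains a minimally incompatible subsystem of size at least $k-1$'' is an unproved assertion carrying the full difficulty of the conjecture, not a reduction of it.
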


This contribution is organized as follows. In Section~\ref{sec-scg}, we 
introduce the split-containment graph $\Gamma(\mathfrak S)$ of a split system $\mathfrak S$. 
We then use that graph to state a characterization of compatibility of a split system
for the multiset case (Theorem~\ref{char-i}). In addition, we show 
that $\Gamma(\mathfrak S)$ can be used to count the number of non-isomorphic tree representations of a compatible split system. This gives 
rise to a characterization of compatible split systems admitting a unique representation (Corollary~\ref{corun}).

In Section~\ref{sec-supt} and \ref{sec-23}, we turn our attention to 
Conjecture~\ref{mainconj}. In particular, we show that for a split system $\mathfrak S$ on some multiset $\mathcal M$, 
the equivalence ($\bigstar$) in Conjecture~\ref{mainconj} holds in case  
(1) $\mathfrak S$ contains a certain number of splits that satisfying a certain 
set-inclusion property (Theorem~\ref{tmd2}), 
(2) the graph $\Gamma(\mathfrak S)$ enjoys a 
sparsity property (Corollary~\ref{cor-thin}), and 
(3) all of the splits in $\mathfrak S$ have the same size, where 
the size of a split $A|B$ equals $\min\{|A|,|B|\}$ (Theorem~\ref{equal}). 
Finally, using (1) we show that in case all of the splits in 
a split system $\mathfrak S$ have size at most 3, 
then $\mathfrak S$ satisfies a slightly weaker version of $(\bigstar)$, in which 
$\Delta(\mathcal M)+2$ is replaced by $\Delta(\mathcal M)+3$  (Theorem~\ref{twothreeholds}).

Before proceeding, we remark that 
in case $\mathcal M$ is a set, 
$\mathcal M$-trees and their relationship with 
compatible split systems form a fundamental part of the 
underlying theory for the area of phylogenetics (see e.g. \cite[Chapter 3]{SS03}).
Moreover, $\mathcal M$-trees for $\mathcal M$ a multiset are closely
related to  \emph{multi-labeled phylogenetic trees} (or \emph{MUL-trees} for short), 
that arise in the context of polyploid studies and phylogenetic network theory (see e.g. \cite{huber2006phylogenetic,PO01}). 
Roughly speaking, MUL-trees are rooted trees equipped with a (not necessary injective) 
function from their leaf set to some multiset. Other 
related structures are \emph{tangled trees} \cite{P94}, introduced 
as a way to model host-parasite co-evolution, 
and \emph{area cladograms} \cite{G06}, that are used in biogeographical studies.

%In particular, deciding whether a split system $\mathfrak S$ on a set $X$ is compatible can be done 
%in polynomial time.  Interestingly,  it has been shown in \cite{HLMS08} that deciding whether a split system $\mathfrak S$ 
%on a multiset $\mathcal M$ is compatible is NP-hard.

\section{The Split-Containment Graph}\label{sec-scg}

As we have seen in the introduction, pairwise compatibility of
a split system does not necessarily imply compatibility.
However, in this section we show that we can characterize
compatibilty in terms of a certain graph that we shall associate
to a split system. As we shall see, this graph also yields a characterization  
for when compatible split system is represented by a unique tree. 

We begin by defining the graph.
For a split system $\mathfrak S=\{S_1, \ldots, S_n\}$, $n \geq 1$ we define 
the \emph{split-containment graph} of $\mathfrak S$ $\Gamma(\mathfrak S)$ of $\mathfrak S$ as follows. 
The vertex set of $\Gamma(\mathfrak S)$ is the multiset $\{(A, S_i): A \in S_i, 1 \leq i \leq n\}$, 
and the arc set of $\Gamma(\mathfrak S)$ is the multiset of 
ordered pairs $((A, S_i), (B, S_j))$ satisfying $A \subsetneq B$ (as multisets) 
and $i \neq j$. Note that since $\mathcal S$ is a multiset, $S_i=S_j$ may hold.

Clearly the graph $\Gamma(\mathfrak S)$ is acyclic. Moreover, that graph satisfies the property that if $((A,S_i),(B,S_j))$ 
is an arc of $\Gamma(\mathfrak S)$, then $((\overline B,S_j),(\overline A,S_i))$ is an arc of $\Gamma(\mathfrak S)$.
It follows that for any $i,j$ distinct, $\Gamma(\{S_i,S_j\})$ either 
contains no edges or it is isomorphic to one of the digraphs $D_1$ or $D_2$, 
where $D_1$ and $D_2$ are digraphs on four vertices $\{v, w, p, q\}$ such that 
$D_1$ has arcs $(v, p), (q, w)$ and $D_2$ has arcs $(v, p), (p, w), (v, q), (q, w)$ (see Figure~\ref{fig-dig}).

\begin{figure}[h]
	\begin{center}
		\includegraphics[scale=1]{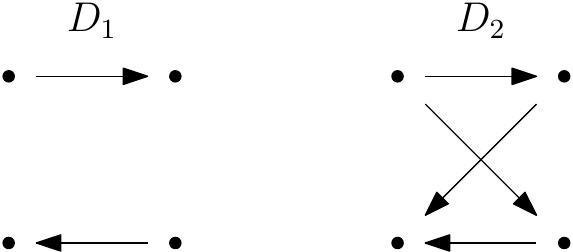}
		\caption{For $\mathfrak S=\{S_1, \ldots, S_n\}$ a split system and $i,j  \in \{1, \ldots, n\}$ distinct, either $\Gamma(\{S_i,S_j\})$ has no edges, or $\Gamma(\{S_i,S_j\})$ is isomorphic to one of the depicted graphs $D_1$ and $D_2$.}
		\label{fig-dig}
	\end{center}
\end{figure}

In view of these observations, we obtain the following lemma: 

\begin{lemma}
	Suppose $\mathfrak S = \{S_1, S_2\}$ is a split system on $\mathcal M$. 
	Then the following are equivalent:
	\begin{itemize}
	\item[(i)] $\mathfrak S$ is compatible.
	\item[(ii)] $\Gamma(\mathfrak S)$ is isomorphic to $D_1$ or $D_2$.
	\item[(iii)] The edge set of $\Gamma(\mathfrak S)$ is non-empty.
	\end{itemize}
	Moreover, if $\mathfrak S$ is compatible, then there is a unique $\mathcal M$-tree 
	representing $\mathfrak S$ if and only if $\Gamma(\mathfrak S)$ is isomorphic to $D_1$.
\end{lemma}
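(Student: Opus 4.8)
The plan is to analyze directly, for a two-split system $\mathfrak S = \{S_1, S_2\}$ with $S_1 = A_1 | B_1$ and $S_2 = A_2 | B_2$, when a tree representation exists, and to read off the structure of $\Gamma(\mathfrak S)$ in each case. First I would dispose of the easy implications among (i), (ii), (iii). The implication (ii) $\Rightarrow$ (iii) is immediate since both $D_1$ and $D_2$ have edges. For (iii) $\Rightarrow$ (i): if $\Gamma(\mathfrak S)$ has an arc, then by definition there is a block $A \in S_1$ (say) and a block $B \in S_2$ with $A \subsetneq B$; writing $S_1 = A|\overline A$ and $S_2 = B|\overline B$ with $A \subsetneq B$, one builds an explicit $\mathcal M$-tree on a path of three edges, placing $A$ at one pendant-side label, $B - A$ (as a multiset) on the middle, and $\overline B$ on the other side, adjusting middle vertices so that (M2) holds — this is the standard "caterpillar" construction and shows $\mathfrak S$ is compatible. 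For (i) $\Rightarrow$ (ii): if $\mathcal T$ represents $\mathfrak S$, then $S_1$ and $S_2$ are induced by two distinct edges $e_1, e_2$ of $T$; removing both edges splits $T$ into three components, and tracking which blocks of $S_1, S_2$ contain which component's labels forces a containment $A \subsetneq B$ for some $A \in S_1$, $B \in S_2$ (here $A \subsetneq B$ rather than $A \subseteq B$ because $S_1 \ne S_2$ as they come from distinct edges — though one must be slightly careful, since on a multiset two distinct edges can induce the same split, in which case $\mathfrak S$ as a multiset still has the arc structure of $D_2$). From such a containment, together with the symmetry observation $((\overline B, S_2),(\overline A, S_1))$ is also an arc, one checks that $\Gamma(\mathfrak S)$ has exactly the arcs of $D_1$ (if only the pair $A \subsetneq B$ and its mirror occur) or of $D_2$ (if additionally $A \subsetneq \overline B$ or $\overline B \subsetneq A$, which happens precisely when some element of $X$ has multiplicity forcing overlap on both sides).

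For the uniqueness clause, I would argue as follows. Suppose $\mathfrak S$ is compatible. In the $D_2$ case we have, after relabelling, $A_1 \subsetneq A_2$ and $A_1 \subsetneq B_2$ (equivalently both $A_2$ and $B_2$ properly contain $A_1$); this means every element of $A_1$ occurs with total multiplicity at least $2$ in $\mathcal M$, so there is "slack" and one can exhibit two non-isomorphic $\mathcal M$-trees — for instance, shift one copy of an element of $A_1$ from one side of the middle edge to the other. Conversely, in the $D_1$ case, $A_1 \subsetneq B_2$ is the only containment (up to the forced mirror), so $A_1$ and $A_2$ are disjoint sub-multisets; I claim the three-edge caterpillar with labels $A_1$, $\mathcal M - (A_1 \cup A_2)$ (middle, possibly empty with extra subdivision to satisfy (M2) only if it would create a forbidden degree-$2$ vertex — but here the middle piece being potentially empty is exactly the situation that pins the tree down), $A_2$ is the unique $\mathcal M$-tree representing $\mathfrak S$, because any representing tree has exactly two edges inducing non-trivial splits and all other edges induce trivial (equal-to-$\mathcal M$-on-one-side) cuts, forcing the shape.

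The main obstacle I anticipate is the bookkeeping around the multiset subtleties in the (i) $\Rightarrow$ (ii) direction and in uniqueness: on a multiset, distinct edges of a representing tree can induce equal splits, empty labels are allowed on internal vertices of degree $\ge 3$, and "$A \subsetneq B$" must be tracked as a strict multiset containment. Handling the case analysis cleanly — making sure that "$\Gamma(\{S_1,S_2\})$ has an edge" really does split into exactly the two isomorphism types $D_1, D_2$ claimed in the paragraph preceding the lemma, and that the $D_1$-versus-$D_2$ dichotomy corresponds exactly to the presence or absence of the extra overlap that creates alternative trees — is where the real care is needed; the tree constructions themselves are routine once the combinatorial picture is fixed.
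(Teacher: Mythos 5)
Your plan is essentially the argument the paper has in mind: the paper states this lemma without a separate proof, deriving it directly from the preceding observation that $\Gamma(\{S_1,S_2\})$ is either edgeless or isomorphic to $D_1$ or $D_2$, and your case analysis of which proper containments $A\subsetneq B$ can hold between parts of $S_1$ and $S_2$, together with the explicit path construction, is the right way to fill that in. One point needs tightening, and it is the load-bearing one for both the construction and the uniqueness clause: a representing tree of a two-element split system has \emph{exactly} two edges, because $\mathfrak S(\mathcal T)$ is the multiset union of the splits induced by \emph{all} edges of $T$, and condition (M2) guarantees that every edge induces a genuine split (each component of $T-e$ contains a degree-one vertex of $T$ and hence a nonempty label), so there are no extra edges inducing ``trivial cuts.'' Consequently your caterpillar should be a path on three vertices (two edges), not ``three edges,'' and the uniqueness argument becomes clean: any representation is a three-vertex path $A \mid B-A \mid \overline B$ determined by a proper containment $A\subsetneq B$, so the non-isomorphic representations correspond exactly to the mirror-pairs of arcs of $\Gamma(\mathfrak S)$ --- one pair for $D_1$ (unique tree), two pairs for $D_2$ (the paths $A\mid B-A\mid\overline B$ and $A\mid\overline B-A\mid B$). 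With that correction your proposal goes through and matches the intended proof.
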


In light of the last lemma, we call a subgraph $G$ of $\Gamma(\mathfrak S)$ \emph{thin} if $V(G)=V(\Gamma(\mathfrak S))$ and 
the restriction of $G$ to $\{(A, S_i), (\overline A, S_i), (B, S_j), (\overline B, S_j)\}$ 
is isomorphic to $D_1$ for all $i,j \in \{1, \ldots, n\}$ distinct. 
We say that $\mathfrak S$ is \emph{thin} if $\Gamma(\mathfrak S)$ is a 
thin subgraph of itself. In particular, if $\mathcal M$ is a set, 
then a split system $\mathfrak S$ on $\mathcal M$ is thin 
if and only if $\mathfrak S$ is compatible. Note that there 
exist split systems $\mathfrak S$ on multisets that are thin but not compatible,
and compatible but not thin. 
For example, the split system $\{xx|xyz,xy|xxz,xz|xxy\}$ 
on $\mathcal M=\{x^3,y,z\}$ is thin but not compatible, and the 
the split system  $\{ab|\overline{ab}, ac|\overline{ac}, cx|\overline{cx}, abcx|abcy\}$ on 
$\mathcal M=\{a^2,b^2,c^2,x,y\}$ represented by the $\mathcal M$-tree depicted 
in Figure~\ref{fig-intro} is not thin.

We now consider the structure of thin subgraphs of $\Gamma(\mathfrak S)$ in 
more detail. For $G$ a thin subgraph of $\Gamma(\mathfrak S)$, we call an 
arc $((A, S_i), (B, S_j))$ of $G$ \emph{critical} if 
there does not exist a directed path from $(A, S_i)$ to $(B, S_j)$ in $G$ 
other that the path formed by the single arc $((A, S_i), (B, S_j))$. 
Note that if $((A, S_i), (B, S_j))$ is a critical arc of $G$, then the 
corresponding arc $((\overline B, S_j), (\overline A, S_i))$ is also a critical 
arc of $G$. 
%Note also that by definition, if two vertices $(A,S_i)$, $(B,S_j)$, $i \neq j$ of $G$ are joined by a directed arc, then there is a path from $(A,S_i)$ to $(B,S_j)$ that contains only critical arcs. The converse, however, does not necessarily hold.
In addition, we call a thin subgraph $G$ of $\mathfrak S$ \emph{consistent} if
\[
\bigcup_{(B, S_j) \in \mathcal C_G((A,S_i))} B \subseteq A.
\]
holds for all $(A,S_i) \in V(G)$, where $\mathcal C_G((A,S_i))$ is the set of all vertices $(B,S_j)$ of $G$ such that $((B,S_j),(A,S_i))$ is a critical arc of $G$.

Interestingly, consistent thin subgraphs are uniquely determined by their critical arcs. Indeed, we have:

\begin{lemma}\label{lm-deter}
Let $\mathfrak S=\{S_1, \ldots, S_n\}$, $n \geq 1$, be a split system 
on a multiset $\mathcal M$, and let $G$ be a consistent thin subgraph of $\Gamma(\mathfrak S)$. 
Then two vertices $(A,S_i), (B,S_j)$, $i \neq j$, $i,j \in \{1,\dots, n\}$, of $G$ are 
joined by an arc if and only if there is a path in $G$ from $(A,S_i)$ to $(B,S_j)$ that contains only critical arcs.
\end{lemma}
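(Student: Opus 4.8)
The ``only if'' direction requires work; the ``if'' direction is immediate, since a path consisting only of critical arcs is in particular a path, and $\Gamma(\mathfrak S)$ — hence any thin subgraph — is transitive in the weak sense that a directed path from $(A,S_i)$ to $(B,S_j)$ forces $A \subsetneq B$ and $i \neq j$ (the latter because $\Gamma$ is acyclic, so no directed path can start and end in the two vertices coming from the same split), and therefore $((A,S_i),(B,S_j))$ is an arc of $\Gamma(\mathfrak S)$; thinness then guarantees it lies in $G$. So the plan is to prove: if $((A,S_i),(B,S_j))$ is an arc of $G$, then there is an all-critical path from $(A,S_i)$ to $(B,S_j)$.

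The approach is induction on the length $\ell$ of a longest directed path in $G$ from $(A,S_i)$ to $(B,S_j)$. If $\ell = 1$ the arc is critical by definition and we are done. If $\ell \geq 2$, consider such a longest path; its last step enters $(B,S_j)$ via some arc $((C,S_k),(B,S_j))$. First I would like to replace this last step by a \emph{critical} arc into $(B,S_j)$, i.e.\ find some $(C',S_{k'}) \in \mathcal C_G((B,S_j))$ with $(A,S_i)$ still reaching $(C',S_{k'})$ by a directed path in $G$. This is where consistency enters: the defining inequality $\bigcup_{(C',S_{k'}) \in \mathcal C_G((B,S_j))} C' \subseteq B$ does not by itself obviously help, but its ``reverse'' form — obtained by applying the symmetry $((A,S_i),(B,S_j)) \mapsto ((\overline B, S_j),(\overline A,S_i))$ that $\Gamma$ and every thin subgraph respect — should give control over how vertices sit below $(B,S_j)$. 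Concretely, I expect that every vertex $(C,S_k)$ with $((C,S_k),(B,S_j))$ an arc of $G$ either is itself a critical predecessor of $(B,S_j)$, or there is an all-critical path from $(C,S_k)$ up to $(B,S_j)$ passing through some critical predecessor; this is exactly Lemma~\ref{lm-deter} applied ``one level down'', so the cleanest route is to fold this into the same induction rather than prove it separately.

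In more detail: take a critical predecessor $(C',S_{k'}) \in \mathcal C_G((B,S_j))$ with $C'$ as large as possible, or chosen so that $C \subseteq C'$ — here one needs that the critical predecessors of $(B,S_j)$ have a ``dominating'' one relative to any given in-neighbour $C$, which is where I expect the bulk of the argument and the main obstacle to lie. Once such $(C',S_{k'})$ is secured with $C \subsetneq C'$ (or $C = C'$), the longest directed path from $(A,S_i)$ to $(C,S_k)$ followed by the arc to $(C',S_{k'})$ (if $C \neq C'$) is a directed path from $(A,S_i)$ to $(C',S_{k'})$ in $G$ that is \emph{strictly shorter} than $\ell$ — because the original path had length $\ell$ ending $(C,S_k)\to(B,S_j)$, and we have instead routed $(A,S_i)\rightsquigarrow(C,S_k)\to(C',S_{k'})$ which avoids one step, bounded by $\ell-1$. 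The induction hypothesis applied to the arc $((A,S_i),(C',S_{k'}))$ of $G$ yields an all-critical path $(A,S_i)\rightsquigarrow(C',S_{k'})$; appending the critical arc $((C',S_{k'}),(B,S_j))$ completes an all-critical path to $(B,S_j)$, as required.

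The crux, then, is the combinatorial claim that for any arc $((C,S_k),(B,S_j))$ of a consistent thin subgraph $G$ there is a critical predecessor $(C',S_{k'})$ of $(B,S_j)$ with $C \subseteq C'$. I would attack this by a maximality/exchange argument inside $G$: among all directed paths in $G$ from $(C,S_k)$ to $(B,S_j)$, take one of maximal length; its final arc enters $(B,S_j)$ from a critical predecessor (else the path could be lengthened), and consistency — via the inequality on $\mathcal C_G((B,S_j))$ together with its dual obtained from the arc-reversal symmetry — forces that critical predecessor's first coordinate to contain $C$. Making this last implication precise, i.e.\ extracting a genuine set-containment between $C$ and the chosen $C'$ from the consistency inequality and the structure of $D_1$-restrictions, is the step I expect to be delicate and where I would spend the most care.
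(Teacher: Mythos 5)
There is a genuine gap, and it sits in the direction you dismiss as immediate. You argue: a directed path in $G$ from $(A,S_i)$ to $(B,S_j)$ forces $A \subsetneq B$, hence $((A,S_i),(B,S_j))$ is an arc of $\Gamma(\mathfrak S)$, and ``thinness then guarantees it lies in $G$.'' That last step is false. Thinness says only that the restriction of $G$ to $\{(A,S_i),(\overline A,S_i),(B,S_j),(\overline B,S_j)\}$ is isomorphic to $D_1$, i.e.\ that \emph{exactly one} of the (up to four) dual pairs of $\Gamma$-arcs between the $S_i$-vertices and the $S_j$-vertices survives in $G$ --- and nothing a priori forces the surviving pair to be the one containing $((A,S_i),(B,S_j))$; it could instead be $((\overline A,S_i),(B,S_j))$ or $((B,S_j),(\overline A,S_i))$, whenever those happen to be arcs of $\Gamma(\mathfrak S)$. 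Ruling out these alternatives is precisely the content of the lemma, and it is where consistency must be used: the paper shows that if, say, $((\overline A,S_i),(B,S_j))$ were the arc of $G$, then one would have two all-critical paths into $(B,S_j)$, one from $(A,S_i)$ and one from $(\overline A,S_i)$, vertex-disjoint up to their first common vertex $(C,S_k)$, and consistency then forces $\mathcal M = A \cup \overline A \subseteq C$, a contradiction (and symmetrically for $((B,S_j),(\overline A,S_i))$). Your proposal never invokes consistency in this direction at all, which is the clearest sign that the key idea is missing.

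On the other direction (arc $\Rightarrow$ all-critical path), which the paper disposes of in one sentence, you have overcomplicated matters and still left the load-bearing step (``the critical predecessors of $(B,S_j)$ have a dominating one relative to any given in-neighbour'') as an unproved expectation. No such claim is needed: take a \emph{longest} directed path $P$ in $G$ from $(A,S_i)$ to $(B,S_j)$ (it exists since $\Gamma(\mathfrak S)$, hence $G$, is a finite acyclic digraph). Every arc of $P$ is critical, for if some arc $((C,S_k),(D,S_l))$ of $P$ admitted a longer directed path from $(C,S_k)$ to $(D,S_l)$, substituting it into $P$ would give a longer directed walk from $(A,S_i)$ to $(B,S_j)$, which in a DAG is automatically a path, contradicting maximality. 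So $P$ itself is the desired all-critical path. I would replace your induction-with-exchange argument by this, and then redirect all the effort you planned to spend there into the ``path $\Rightarrow$ arc'' direction, where the real work lies.
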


\begin{proof}
One direction is trivial: If $((A,S_i),(B,S_j))$ is an arc of $G$, then 
either $((A,S_i),(B,S_j))$ is a critical arc of $G$, or there is a path in 
$G$ from $(A,S_i)$ to $(B,S_j)$ of length two or more, such that all arcs on this path are critical arcs.

Conversely, suppose that there exists a path in $G$ from $(A,S_i)$ to $(B,S_j)$
that contains only critical arcs. In particular, $A \subsetneq B$ holds. Since $G$ 
is thin, exactly one of  the ordered 
pairs $((A,S_i),(B,S_j))$, $((B,S_j),(A,S_i))$, $((\overline A,S_i),(B,S_j))$, 
and $(B,S_j)), (\overline A,S_i))$ must be an arc of $G$. So it suffices to show that 
for the last three pairs this is impossible.

If $((B,S_j),(A,S_i))$ is an arc of $G$, then $B \subseteq A$ 
holds, which is impossible given that $A \subsetneq B$ holds.

If $((\overline A,S_i),(B,S_j))$ is an arc of $G$, then 
there exists a path $P_1$ from $(\overline A,S_i)$ to $(B,S_j)$ in $G$ 
that contains only critical arcs. By assumption, 
there also exists a path $P_2$ in $G$ from $(A,S_i)$ to $(B,S_j)$ 
that contains only critical arcs. Now, let $(C,S_k)$ be the first vertex 
that is common to $P_1$ and $P_2$. Such a vertex must exist, 
since $P_1$ and $P_2$ have the same end-vertex $(B, S_j)$. 
Since $G$ is consistent, and the paths from $(\overline A,S_i)$ 
to $(C,S_k)$ and from $(A,S_i)$ to $(C,S_k)$ are vertex-disjoint 
by choice of $(C,S_k)$, it follows that $\mathcal M=\overline A \cup A \subseteq C$, 
which is impossible.

Finally, if $((B,S_j),(\overline A,S_i))$ is an arc of $G$, then 
there exists a path $P_1$ from $(B,S_j)$ to $(\overline A,S_i)$ in $G$ 
that contains only critical arcs. Since by assumption, there 
exists a path in $G$ from $(A,S_i)$ to $(B,S_j)$ that contains only critical arcs, 
there must exist a path $P_2$ in $G$ from $(\overline B,S_j)$ to $(\overline A,S_i)$ 
with the same property. Now, let $(C,S_k)$ be the first vertex that is 
common to $P_1$ and $P_2$. Such a vertex must exist, since $P_1$ and $P_2$ 
have the same end-vertex $(\overline A, S_i)$. Since $G$ is consistent, and 
the paths from $(B,S_j)$ to $(C,S_k)$ and from $(\overline B,S_j)$ to $(C,S_k)$ 
are vertex-disjoint by choice of $(C,S_k)$, it follows that 
$\mathcal M=B \cup \overline B \subseteq C$, which is impossible.
\end{proof}

In addition, the existence of certain pairs of critical arcs in a 
consistent thin subgraph forces the existence of further critical arcs:

\begin{lemma}\label{lm-trans}
	Let $\mathfrak S=\{S_1, \ldots, S_n\}$, $n \geq 1$, be a 
	split system on a multiset $\mathcal M$, and let $G$ be a consistent 
	thin subgraph of $\Gamma(\mathfrak S)$. If $i,j,k \in \{1, \ldots, n\}$ 
	are such that $((\overline A,S_i),(B,S_j))$ and $((\overline C, S_k),(B,S_j))$ 
	are critical arcs of $G$, then $((\overline A,S_i),(C,S_k))$ is a critical arc of $G$.
\end{lemma}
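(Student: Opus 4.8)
The plan is to prove the statement in two stages: first to show that the ordered pair $((\overline A,S_i),(C,S_k))$ is an arc of $G$ at all, and then that it is a \emph{critical} arc. Before starting I would note that $i\neq j$ and $k\neq j$ (the two hypothesised pairs are arcs of $\Gamma(\mathfrak S)$), and that we may take $i\neq k$: if $i=k$ then $\{A,\overline A\}=\{C,\overline C\}$, the case $A=\overline C$ being impossible because consistency of $G$ at $(B,S_j)$ would then force $\mathcal M=A\cup\overline A\subseteq B$, and the case $A=C$ making the two hypothesised arcs coincide. I would also record two consequences of the hypothesis used throughout: the reverse-complement critical arcs $((\overline B,S_j),(A,S_i))$ and $((\overline B,S_j),(C,S_k))$ (so $\overline B\subsetneq A$ and $\overline B\subsetneq C$), and the relation $\overline A\cup\overline C\subseteq B$ coming from consistency of $G$ at $(B,S_j)$, since $(\overline A,S_i)$ and $(\overline C,S_k)$ are critical predecessors of $(B,S_j)$.

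For the first stage I would use thinness: since $i\neq k$, the restriction of $G$ to $\{(A,S_i),(\overline A,S_i),(C,S_k),(\overline C,S_k)\}$ is isomorphic to $D_1$, so exactly one of $((A,S_i),(C,S_k))$, $((A,S_i),(\overline C,S_k))$, $((\overline A,S_i),(C,S_k))$, $((\overline A,S_i),(\overline C,S_k))$ is an arc of $G$. I would then eliminate the three unwanted possibilities. If $((A,S_i),(C,S_k))\in G$ then $A\subsetneq C$, giving the path $(\overline B,S_j)\to(A,S_i)\to(C,S_k)$ in $G$, which contradicts criticality of $((\overline B,S_j),(C,S_k))$. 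If $((\overline A,S_i),(\overline C,S_k))\in G$ then $\overline A\subsetneq\overline C$, giving the path $(\overline A,S_i)\to(\overline C,S_k)\to(B,S_j)$ (using $\overline C\subsetneq B$), contradicting criticality of $((\overline A,S_i),(B,S_j))$. Finally, $((A,S_i),(\overline C,S_k))\in G$ forces $C\subsetneq\overline A$, and combining with $\overline A\cup\overline C\subseteq B$ yields $\mathcal M(x)=C(x)+\overline C(x)\leq\overline A(x)+\overline C(x)\leq B(x)$ for all $x$, i.e.\ $\mathcal M\subseteq B$, contradicting $\overline B\neq\emptyset$. Hence $e:=((\overline A,S_i),(C,S_k))$ is an arc of $G$ (and so is its reverse-complement $((\overline C,S_k),(A,S_i))$, by the $D_1$-shape at those four vertices).

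For the second stage I would argue by contradiction. If $e$ is not critical then, since $e$ is an arc of $G$ and $i\neq k$, Lemma~\ref{lm-deter} produces a path of critical arcs from $(\overline A,S_i)$ to $(C,S_k)$; as $e$ is not critical this path has length $p\geq2$, say $(\overline A,S_i)=Z_0\to\cdots\to Z_p=(C,S_k)$, and I set $Z_1=(D,S_l)$. One checks $l\notin\{i,k\}$ (if $l=i$ then $D=A$ but $((\overline A,S_i),(A,S_i))$ is not an arc of $\Gamma(\mathfrak S)$; if $l=k$ then $D=\overline C$, and $e$ together with $((\overline A,S_i),(\overline C,S_k))$ would give two arcs out of $(\overline A,S_i)$ among four vertices inducing $D_1$), that $((\overline A,S_i),(D,S_l))$ is critical, and that $((D,S_l),(C,S_k))$ is an arc of $G$ (Lemma~\ref{lm-deter} applied to $Z_1\to\cdots\to Z_p$). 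If $l\neq j$, I would run through the four cases of the $D_1$-restriction of $G$ to $\{(D,S_l),(\overline D,S_l),(B,S_j),(\overline B,S_j)\}$: $D\subsetneq B$ gives the path $(\overline A,S_i)\to(D,S_l)\to(B,S_j)$, contradicting criticality of $((\overline A,S_i),(B,S_j))$; $\overline D\subsetneq\overline B$ (i.e.\ $B\subsetneq D$) gives $(\overline A,S_i)\to(B,S_j)\to(D,S_l)$, contradicting criticality of $((\overline A,S_i),(D,S_l))$; $\overline D\subsetneq B$ gives, via the reverse-complement $((\overline C,S_k),(\overline D,S_l))$ of $((D,S_l),(C,S_k))$, the path $(\overline C,S_k)\to(\overline D,S_l)\to(B,S_j)$, contradicting criticality of $((\overline C,S_k),(B,S_j))$; and $D\subsetneq\overline B$ makes $\overline D$ and $\overline B$ critical predecessors of $(A,S_i)$ with $B\subsetneq\overline D$, so consistency forces $\mathcal M\subseteq A$, impossible. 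The residual case $l=j$ splits into $D=\overline B$ (then $(B,S_j)$ and $(\overline B,S_j)$ are both critical predecessors of $(A,S_i)$, forcing $\mathcal M=B\cup\overline B\subseteq A$) and $D=B$ (then $Z_1\to\cdots\to Z_p$ is a critical-arc path from $(B,S_j)$ to $(C,S_k)$, so $((B,S_j),(C,S_k))\in G$ by Lemma~\ref{lm-deter}, but the $D_1$-restriction of $G$ to $\{(B,S_j),(\overline B,S_j),(C,S_k),(\overline C,S_k)\}$ already contains $((\overline C,S_k),(B,S_j))$ and therefore cannot contain $((B,S_j),(C,S_k))$). Every case yields a contradiction, so $e$ is critical.

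The first stage is routine once the reverse-complement arcs and the consistency relation at $(B,S_j)$ are in place; the real work is the second stage. The main obstacle I anticipate is organising that case analysis so that the vertex chosen on the hypothetical alternative path carries exactly the properties needed — in particular, choosing $Z_1$ on an \emph{all-critical} path, so that $((\overline A,S_i),(D,S_l))$ is itself critical (several cases close precisely by contradicting its criticality), and disposing of the index coincidence $l=j$ without recursing on the path length; the observation that makes the subcase $l=j$, $D=B$ collapse is that a critical-arc path from $(B,S_j)$ to $(C,S_k)$ forces an arc that the $D_1$-shape on four vertices forbids.
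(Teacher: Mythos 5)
Your proof is correct and follows essentially the same two-stage strategy as the paper: first pin down the arc $((\overline A,S_i),(C,S_k))$ by eliminating the other three possibilities permitted by thinness (using criticality of the hypothesised arcs and consistency at $(B,S_j)$), then derive a contradiction from a hypothetical intermediate vertex $(D,S_l)$ via a four-way case analysis against the pair $(B,S_j),(\overline B,S_j)$. The only substantive differences are that you take $(D,S_l)$ to be the first vertex of an all-critical path supplied by Lemma~\ref{lm-deter} (so a couple of subcases close by contradicting criticality of $((\overline A,S_i),(D,S_l))$ or by consistency at $(A,S_i)$, where the paper instead invokes consistency at $(C,S_k)$ or criticality of $((D,S_l),(C,S_k))$), and that you explicitly dispose of the degenerate possibility $l=j$, which the paper's four-way thinness argument tacitly assumes does not occur.
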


\begin{figure}[h]
	\begin{center}
		\includegraphics[scale=1]{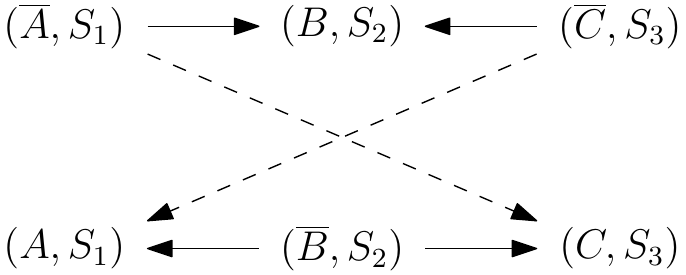}
		\caption{A thin subgraph $G$ of $\Gamma(\mathfrak S)$, 
			where $\mathfrak S=\{A|\overline A, B|\overline B, C|\overline C\}$. 
			By Lemma~\ref{lm-trans}, if $G$ is consistent, and the four 
			solid arcs are critical arcs of $G$, then 
			the dashed arcs must be critical arcs of $G$.}
		\label{help-tr}
	\end{center}
\end{figure}

\begin{proof}
	We first show that $((\overline A,S_i),(C,S_k))$ is an arc of $G$. 
	Since $G$ is thin, exactly one of the ordered pairs 
	$((\overline A,S_i),(C,S_k))$, $((\overline A,S_i),(\overline C,S_k))$, 
	$((C,S_k),(\overline A,S_i))$ and $((\overline C,S_k),(\overline A,S_i))$ 
	must be an arc of $G$. Clearly, $((\overline A,S_i),(\overline C,S_k))$ is not 
	an arc of $G$, as in this case $((\overline A,S_i),(B,S_j))$ is 
	not a critical arc of $G$. By symmetry, $((\overline C,S_k),(\overline A,S_i)$ 
	is not an arc of $G$ either. It is also impossible to have $((C,S_k),(\overline A,S_i))$ 
	an arc in $G$. Indeed, $\mathfrak S$ is consistent, so
	$B$ contains the multiset union $\overline A \cup \overline C$. If $C \subset \overline A$ holds, then it follows that $\mathcal M= C \cup \overline C \subseteq A \cup \overline C \subseteq B$, which is impossible.
 Hence, $((\overline A,S_i),(C,S_k))$ is an arc of $G$.
	
	Now, assume for contradiction that the arc $((\overline A,S_i),(C,S_k))$ is not critical. 
	This means that there exists a further vertex $(D,S_l)$ of $G$ such that $G$ contains 
	a directed path from $(\overline A,S_i)$ to $(D,S_l)$, and a directed path from $(D,S_l)$ to $(C,S_k)$. 
	Without loss of generality, we can choose $(D,S_l)$ such that $((D,S_l),(C,S_k))$ 
	is a critical arc of $G$. Since $G$ is thin, exactly one of $((B,S_j),(D,S_l))$, $((D,S_l),(\overline B,S_j))$, $((\overline B,S_j),(D,S_l))$ and $((D,S_l),(B,S_j))$ must be an arc of $G$. 
	We next show that neither of these arcs can be an arc of $G$.
	
	Since $G$ is consistent, $C$ contains the multiset union $D \cup \overline B$. Hence, $B \subseteq D$ 
	cannot hold, as this would imply $\mathcal M \subseteq C$, so $((B,S_j),(D,S_l))$ is not an arc of $G$.
	
     If $((D,S_l),(\overline B,S_j))$ is an arc of $G$, then this contradicts the assumption 
     that $((D,S_l),(C,S_k))$ is critical. Similarly, if $((\overline B,S_j),(D,S_l))$ is an arc of $G$, then 
	this contradicts the assumption that $((\overline B,S_j),(C,S_k))$ is critical. So, 
	neither $((D,S_l),(\overline B,S_j))$ nor $((\overline B,S_j),(C,S_k))$ are arcs of $G$.
	
	Finally, suppose $((D,S_l),(B,S_j))$ is an arc of $G$. By assumption, there is a directed path in $G$ from
	$(\overline A, S_i)$ to $(D,S_l)$. Since $\Gamma(\mathfrak S)$ is acyclic, this path does not contain
	the arc $((\overline A, S_i),(B,S_j))$.  But this contradicts the assumption 
	that $((\overline A, S_i),(B,S_j))$ is critical. Hence, $((D,S_l),(B,S_j))$ is not an arc of $G$.
	
	In summary, the existence of a vertex $(D,S_l)$ distinct from $(\overline A,S_i)$ and $(C,S_k)$ such that $G$ contains 
	a directed path from $(\overline A,S_i)$ to $(D,S_l)$, and a directed path from $(D,S_l)$ to $(C,S_k)$ is impossible. Hence, $((\overline A,S_i),(C,S_k))$ is a critical arc of $G$.
\end{proof}

We are now in a position to give a characterization for when a split system $\mathfrak S$ is compatible in terms of the existence of consistent thin subsets of $\Gamma(\mathfrak S)$.

\begin{theorem}\label{char-i}
	Let $\mathfrak S$, $n \geq 1$, be a split system on a multiset $\mathcal M$. 
	Then $\mathfrak S$ is compatible if and only if there exists a consistent thin subgraph $G$ of $\Gamma(\mathfrak S)$.
\end{theorem}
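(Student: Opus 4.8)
The plan is to prove the two directions separately, with the forward direction being essentially bookkeeping and the reverse direction being the substantive construction.

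For the forward direction, suppose $\mathfrak S$ is compatible and let $\mathcal T=(T,\lambda)$ be an $\mathcal M$-tree representing it. Each split $S_i = A_i|\overline{A_i}$ corresponds to an edge $e_i$ of $T$, whose removal gives components $T_1^i$ (with label-union $A_i$) and $T_2^i$ (with label-union $\overline{A_i}$). I would define a subgraph $G$ of $\Gamma(\mathfrak S)$ by orienting, for each pair $i\neq j$, the arc between $\{(A_i,S_i),(\overline{A_i},S_i)\}$ and $\{(A_j,S_j),(\overline{A_j},S_j)\}$ according to how the edges $e_i, e_j$ sit in $T$: say $(A_i,S_i)\to(A_j,S_j)$ is the arc of $G$ precisely when $A_i \subsetneq A_j$ and the side $T_1^i$ of $e_i$ is "nested inside" the side $T_1^j$ of $e_j$ in the tree — equivalently when $e_i$ lies strictly on the $A_j$-side of $e_j$ and $e_j$ lies strictly on the $\overline{A_i}$-side of $e_i$. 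Because two distinct edges of a tree are always comparable in this nesting sense, exactly one of the four candidate arcs is chosen for each pair, so $G$ restricted to each such four-vertex set is isomorphic to $D_1$; hence $G$ is thin. For consistency, I would identify the critical arcs of $G$ with the "covering" edge-nesting relations in $T$ (one edge immediately inside another with no edge of $T$ corresponding to a split strictly between them), and observe that if $(B,S_j)\in\mathcal C_G((A,S_i))$ then the component $T_1^j$ is a subtree strictly contained in the component $T_1^i$; since distinct such $T_1^j$ are disjoint and all contained in $T_1^i$, the multiset union of the $B$'s is at most the union of labels in $T_1^i$, i.e.\ $\subseteq A$. This gives a consistent thin subgraph.

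For the reverse direction, I would start with a consistent thin subgraph $G$ and build an $\mathcal M$-tree. The natural approach: use the critical arcs of $G$ to define a partial order (a forest) on the "split-sides" $\{(A,S_i)\}$, take its Hasse-diagram-like structure, and glue in vertices of $T$ at the "gaps." Concretely, for each vertex $(A,S_i)$ of $G$ let $f(A,S_i) = A \setminus \bigcup_{(B,S_j)\in\mathcal C_G((A,S_i))} B$ (a sub-multiset of $A$, possibly empty, well-defined as a multiset by consistency); I would create a tree $T$ whose edges correspond to unordered pairs $\{S_i\}$, arranged so that the nesting of edges reflects the critical-arc order, and label vertices using the residual multisets $f(\cdot)$ together with a single "root bag" for the part of $\mathcal M$ outside every $A_i$. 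Lemmas~\ref{lm-deter} and~\ref{lm-trans} are exactly what make this well-defined: Lemma~\ref{lm-trans} guarantees that the critical-arc relation on the relevant sides behaves transitively enough to form a genuine forest (so the incidences at each tree vertex are consistent), and Lemma~\ref{lm-deter} guarantees that every arc of $G$ — not just the critical ones — is recovered as a nesting in the constructed tree, so that $\mathfrak S(\mathcal T)$ is exactly $\mathfrak S$ rather than something coarser. One then checks (M1): every element of $\mathcal M$ lands in exactly one residual bag, because the sides containing a fixed occurrence of an element form a chain under inclusion (restricted via the thinness of $G$ on each pair), and (M2): degree-$1$ and degree-$2$ vertices get nonempty labels because a leaf corresponds to an innermost side $A_i$ whose residual $f(A_i,S_i)$ must be nonempty (otherwise $A_i$ would equal a union of strictly smaller sides, contradicting that $A_i$ is the side of an actual split, or one massages the tree to suppress such a vertex).

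The main obstacle is the reverse direction — specifically, turning the combinatorial data of critical arcs into an honest tree and verifying that the induced split system is exactly $\mathfrak S$. The delicate points are: (a) showing the critical-arc structure really assembles into a tree (no "conflicting" incidences), where Lemma~\ref{lm-trans} does the heavy lifting; (b) showing no split is lost or duplicated, where Lemma~\ref{lm-deter} is essential; and (c) handling the residual-label bookkeeping for multiset elements so that (M1) holds — this requires knowing that for each occurrence of an element $x$, the set of sides $A$ with that occurrence of $x \in A$ is totally ordered by inclusion, which follows from applying the thin/$D_1$ condition pairwise and then using Lemma~\ref{lm-deter} to propagate it. I expect (b) and (c) to be where the argument needs the most care, since the failure examples (e.g.\ $\{ab|acd,ac|abd,ad|abc\}$) show that without consistency the construction genuinely breaks down, so consistency must be invoked at precisely the step that rules those out.
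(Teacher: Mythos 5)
Your forward direction is essentially the paper's argument: orient each pair of split-sides according to how the corresponding tree edges nest, get thinness from the fact that two distinct edges of a tree are comparable in exactly one of the four possible ways, and get consistency from the observation that the critical predecessors of $(A,S_i)$ correspond to the edges sharing the endpoint $v$ of $e(S_i)$ on the $A$-side, so that their union is $A-\lambda(v)\subseteq A$. That half is fine.

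The reverse direction is where the substance lies, and your plan stops exactly at the step that has to be made precise. ``Create a tree whose edges correspond to the $S_i$, arranged so that the nesting of edges reflects the critical-arc order, with residual bags at the gaps'' does not yet define a labeled tree: you must specify which split-sides are identified into a single tree vertex, and you must verify that the residual multisets $f(\cdot)$ of all sides so identified coincide, or the vertex has no well-defined label. The paper does this by introducing an explicit equivalence relation on $V(G)$, namely $(A,S_i)\sim (B,S_j)$ iff $((\overline A,S_i),(B,S_j))$ is a critical arc, whose transitivity is precisely Lemma~\ref{lm-trans}; it then proves that $\lambda_G$ is constant on equivalence classes (again via Lemma~\ref{lm-trans}), and finally shows by induction on $|\mathfrak S|$ --- peeling off a vertex of indegree $0$, applying the induction hypothesis to the subgraph obtained by deleting that split, and re-attaching a leaf --- that the quotient is an $\mathcal M$-tree representing $\mathfrak S$. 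None of these three steps is routine, and your sketch does not indicate how you would carry out any of them; in particular, the assertion that Lemma~\ref{lm-deter} ``guarantees that every arc of $G$ is recovered as a nesting in the constructed tree'' is the conclusion you need, not an argument for it. Two further warning signs: (a) your ``single root bag for the part of $\mathcal M$ outside every $A_i$'' suggests you are tracking only one side per split, whereas $\Gamma(\mathfrak S)$ contains both sides of every split and the residual bags of the equivalence classes already cover $\mathcal M$, so no extra bag exists or is needed; (b) your (M1) argument via ``the sides containing a fixed occurrence of an element form a chain'' presupposes a canonical matching of occurrences of a repeated element across different splits, which does not exist for multisets and is exactly the difficulty that separates this setting from Buneman's theorem --- the paper obtains (M1) from the induction rather than from any such chain argument.
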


\begin{proof}
	Let $\mathfrak S=\{S_1, \ldots S_n\}$, $n \geq 1$.
	Assume first that $\mathfrak S$ is compatible. Let $\mathcal T=(T,\lambda)$ 
	be a representation of $\mathfrak S$. By definition, 
	there exists a bijection $e: \mathfrak S \to E(T)$ such that for all $i \in \{1, \ldots n\}$, 
	the graph $T-e(S_i)$ has two connected components, each labeled with one part of $S_i$. 
	We construct $G$ from $\mathcal T$ as follows: 
	The vertices of $G$ are the vertices of $\Gamma(\mathfrak S)$. The arcs of $G$ 
	are the arcs $((A,S_i),(B,S_j))$ such that the connected component of $T-e(S_i)$ 
	labeled with $A$ is a subgraph of the connected component of $\mathcal T-e(S_j)$ labeled with $B$. 
	As $\mathcal T$ is an $\mathcal M$-tree
	this defnition implies that $A \subsetneq B$ for all arcs $((A,S_i),(B,S_j))$ of $G$, so $G$ is a subgraph of $\Gamma(\mathfrak S)$. 
	Moreover, for any two $i,j \in \{1, \ldots, n\}$ distinct, the subgraph of $G$ induced 
	by $\{(A, S_i), (\overline A, S_i), (B, S_j), (\overline B, S_j)\}$ contains exactly two arcs, 
	so $G$ is thin. To see that $G$ is consistent, let $(A,S_i)$ be a vertex of $G$. By definition, an arc $((C,S_k),(A,S_i))$ of $G$ 
	is critical if and only if the edges $e(S_i)$ and $e(S_j)$ share a vertex $v$ in $T$. 
	It then follows directly that $\bigcup_{(C,S_k) \in \mathcal C_G((A,S_i))} C=A-\lambda(v) \subseteq A$, which 
	proves that $G$ is consistent.
	
	Conversely, assume that $\Gamma(\mathfrak S)$ has a consistent thin subgraph $G$. 
	We define $\lambda_G:V(G) \to \mathcal P(\mathcal M)$ by putting, 
	for $(A,S_i) \in V(G)$, $\lambda(A,S_i)=A-\bigcup_{(C,S_k) \in \mathcal C_G((A,S_i))} C$. 
	Since $G$ is consistent, the union is contained in $A$, so $\lambda_G$ is well defined.
	
	Next, we define an equivalence relation $\sim_G$ on $V(G)$ as follows: for $(A,S_i),(B,S_j) \in V(G)$, 
	we put $(A,S_i) \sim_G (B,S_j)$ if and only if $(A,S_i)$ and $(B,S_j)$ are the same vertex of $G$, or $((\overline A,S_i),(B,S_j))$ is 
	a critical arc of $G$. By definition, $\sim_G$ is reflexive. It is also symmetric, 
	as $((\overline A,S_i),(B,S_j))$ is critical if and only if $((\overline B,S_j),(A,S_i))$ is critical. 
	Finally, transitivity is a direct consequence of Lemma~\ref{lm-trans}.
	
	We claim that for all pairs $(A,S_i),(B,S_j)$ of vertices of $G$ with $(A,S_i)\sim_G (B,S_j)$, we 
	have $\lambda_G(B,S_j)=\lambda_G(A,S_i)$. To see that, note 
		that by Lemma~\ref{lm-trans}, if $((C,S_k),(B,S_j))$ is a critical arc of $G$ 
		distinct from $((\overline A,S_i),(B,S_j))$, then $((C,S_k),(A,S_i))$ is a critical arc of $G$. 
		Since the roles of $A$ and $B$ are symmetric in this argument it follows that the 
		sets
		$$
		\{(C,S_k) \in V(G)-\{(\overline A,S_i)\}: (C,S_k) \in \mathcal C_G((B,S_j))\}
		$$ 
		and $$
		\{(C,S_k) \in V(G)-\{(\overline B,S_j)\}: (C,S_k) \in \mathcal C_G((A,S_i))\}$$ 
		are equal. Denoting this set by $\mathcal C$, we have $\lambda_G((A,S_i))=(A-\overline B)-\bigcup_{(C, S_k) \in \mathcal C} C$ 
		and $\lambda_G((B,S_j))=(B-\overline A)-\bigcup_{(C, S_k) \in \mathcal C} C$. 
		Since $A-\overline B=B-\overline A$, it follows that $\lambda_G((A,S_i))=\lambda_G((B,S_j))$ as claimed.

	Now, we denote by $T$ the undirected graph whose vertex set is the set 
	equivalence classes of $\sim_G$, where two equivqlence classes $u$, $v$ are joined by an edge 
	if and only if there exists $i \in \{1, \ldots, n\}$ such that $(A,S_i) \in u$ and $(\overline A, S_i) \in v$.
	Note that by construction, the degree of a vertex $u$ of $T$ is precisely the size of the equivalence class $u$.
	In view of the above, $\lambda_G$ trivially induces a map $\lambda:V(T) \to \mathcal P(\mathcal M)$.

%We first show that $\mathcal T(G)=(T,\lambda)$ is a $\mathcal M$-tree. Since $G$ is acyclic, $T$ is acyclic by construction, so $T$ is a tree. Moreover, the degree of a vertex $u$ of $T$ is precisely the size of the equivalence class $u$. In particular, if $u$ contains only one element $(A,S_i)$, then $\lambda(u)=A$, and if $u$ contains exactly two elements $(A,S_i)$ and $(B,S_j)$, then $\lambda(u)=A-\overline B \neq \emptyset$.
	We next show that $\mathcal T(G)=(T,\lambda)$ is a representation of $\mathfrak S$.
	We do this by induction on $n=|\mathfrak S|$. If $n=1$, this is trivial, as $T$ is a 
	single edge $\{u,v\}$ with $\lambda(u)=A$ and $\lambda(v)=\overline A$, where $A|\overline A$ 
	is the unique element of $\mathfrak S$. Assume then that $n \geq 2$, and that the property holds 
	for all split systems $\mathfrak S'$ with $|\mathfrak S'| < |\mathfrak S|$.
	
	Let $(A,S_i)$ be a vertex of indegree $0$ in $G$ (which exists as $\Gamma(\mathfrak S)$ is acyclic), and let $G'$ be the graph 
	obtained from $G$ by removing the vertices $(A,S_i)$ and $(\overline A,S_i)$ and their adjacent edges. 
	Clearly, $G'$ is a consistent thin subgraph of $\Gamma(\mathfrak S-\{S_i\})$. 
	By our induction hypothesis $\mathcal T(G')=(T',\lambda')$ is a representation of $\mathfrak S-\{S_i\}$. 
	Since $(A,S_i)$ has indegree 0 in $G$, $(A,S_i)$ is the unique element of 
	its equivalence class under $\sim_G$. In particular, there exists a leaf $u$ of $T$ 
	such that $\lambda(u)=A$. Moreover, $(A,S_i)$ has outdegree at least 1 in $G$, 
	so there exists a vertex $(B,S_j)$ of $G$ such that $((A,S_i),(B,S_j))$ is a critical 
	arc of $G$. So, $(\overline A, S_i) \sim_G (B,S_j)$ holds. Finally, since $(A,S_i)$ 
	has indegree $0$ in $G$ (and by symmetry, $(\overline A, S_i)$ has outdegree $0$ in $G$), 
	it follows that an arc $((C,S_k),(B,S_j))$ of $G'$ is a critical arc of $G'$ if and only 
	if $((C,S_k),(B,S_j))$ is a critical arc of $G$. Hence, the equivalence classes 
	of $\sim_G$ are exactly the equivalence classes of $\sim_{G'}$, 
	plus the equivalence class of $(A,S_i)$. In particular, $V(T)=V(T') \cup \{u\}$, 
	and $T$ is obtained from $T'$ by adding leaf $u$ to the vertex $w$ of $T'$ 
	corresponding to the equivalence class of $(\overline A,S_i)$ under $\sim_G$. 
	Moreover, we have $\lambda(v)=\lambda'(v)$ for all $v \in V(T)$ distinct from $u,w$, 
	and, as already stated, $\lambda(u)=A$. Finally, we 
	have $\lambda(w)=B-\bigcup_{(C,S_k) \in \mathcal C_G((B,S_j))} C$ 
	and $\lambda'(w)=B-\bigcup_{(C,S_k) \in \mathcal C_{G'}((B,S_j))} C$, 
	so $\lambda(w)=\lambda'(w)-A$. In particular, $A \subseteq \lambda'(w)$ holds.

Putting these observations together, it follows that $\mathcal T(G)$ is 
obtained from $\mathcal T(G')$ by (i) attaching a new leaf labeled with $A$ to 
	the vertex $w$, and (ii) removing $A$ from 
	the label of $w$. Since $\mathcal T(G')$ is a $\mathcal M$-tree by 
	our induction hypothesis, it follows that $\mathcal T(G)$ is a $\mathcal M$-tree 
	if and only if $w$ is not a leaf of $\mathcal T(G')$ with $\lambda'(w)=A$. 
	To see that this is the case, we remark that the degree of $w$ in $\mathcal T(G')$ 
	is precisely the size of the equivalence class of $\sim_{G'}$ corresponding to $w$. 
	In particular, if $w$ is a leaf, then $(B,S_j)$ is the unique element of its 
	equivalence class under $\sim_{G'}$, and $\lambda'(w)=B$. 
	Since $((A,S_i),(B,S_j))$ is an arc of $\Gamma(\mathfrak S)$, we 
	have $A \subsetneq B$, so $\lambda_{G'}(w) \neq A$ as claimed.	
	
So, $\mathcal T(G)$ is a $\mathcal M$-tree, and we have by construction $\mathfrak S(\mathcal T(G))=\mathfrak S(\mathcal T(G')) \cup \{S_i\}=\mathfrak S$. So, $\mathcal T(G)$ is a representation of $\mathfrak S$.
\end{proof}

If $\mathfrak S$ is a compatible split system and $G$ is a 
consistent thin subgraph of $\Gamma(\mathfrak S)$, we denote by $\mathcal T(G)$ the $\mathcal M$-tree
constructed from $G$ as in the proof of Theorem~\ref{char-i}. As shown in that proof, 
$\mathcal T(G)$ is a representation of $\mathfrak S$. Note that there are 
compatible split systems $\mathfrak S$ for
which $\Gamma(\mathfrak S)$ has two or more distinct consistent
thin subgraphs $G, G'$ such that $\mathcal T(G)$ is isomorphic to 
$\mathcal T(G')$. For example, for the split system $\mathfrak S = \{a|abbcd,abc|abd,b|aabcd\}$ 
on $\mathcal M=\{a^2,b^2,c,d\}$, the graph $\Gamma(\mathfrak S)$ has 
four distinct consistent thin subgraphs, two of which we depict in Figure~\ref{fig-consth}.

\begin{figure}[h]
	\begin{center}
		\includegraphics[scale=.8]{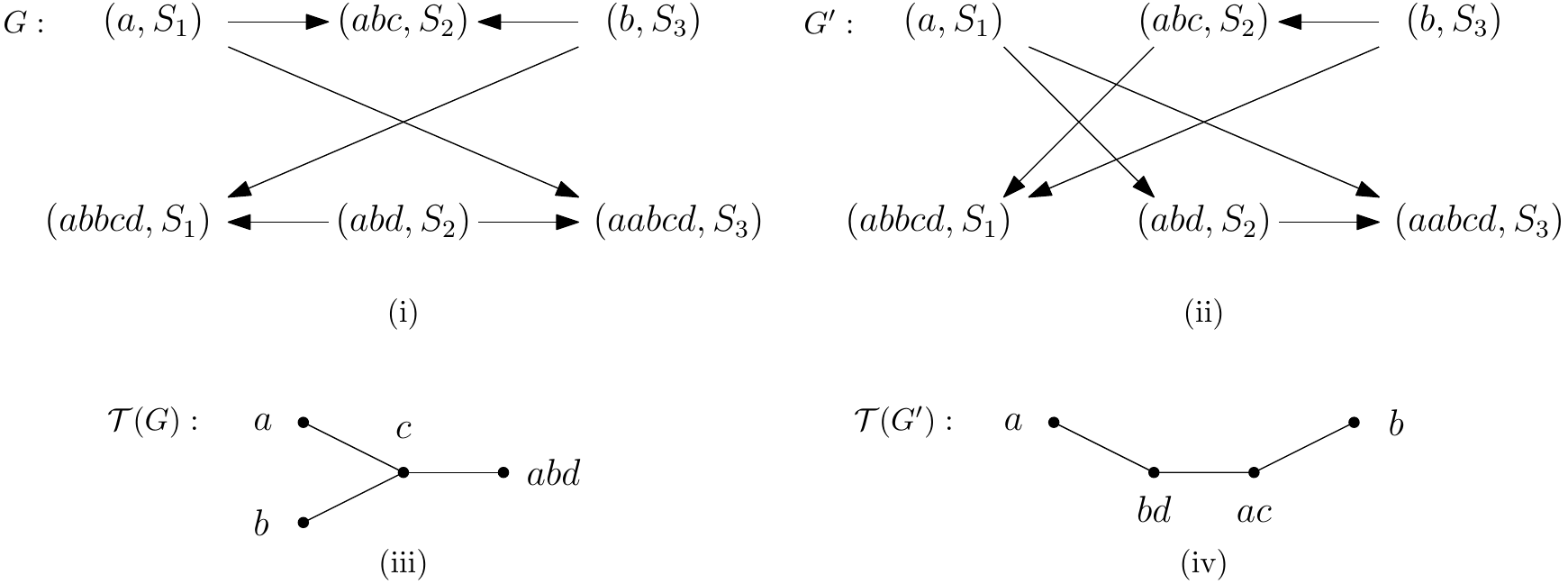}
		\caption{(i) and (ii) Two consistent thin subgraphs of $\Gamma(\mathfrak S)$, where $\mathfrak S=\{a|abbcd,abc|abd,b|aabcd\}$. The other two consistent thin subgraphs of $\Gamma(\mathfrak S)$ can be obtained by swapping the vertices $(abc,S_2)$ and $(abd,S_2)$ in $G$ and in $G'$. (iii) and (iv) The $\mathcal M$-trees $\mathcal T(G)$ and $\mathcal T(G')$ constructed from $G$ and $G'$ respectively, as in the proof of Theorem~\ref{char-i}. Both $\mathcal T(G)$ and $\mathcal T(G')$ are representations of $\mathfrak S$. The other two representations of $\mathfrak S$ can be obtained by swapping labels $c$ and $d$ in $\mathcal T(G)$ and in $\mathcal T(G')$.}
		\label{fig-consth}
	\end{center}
\end{figure}

The last example motivates the final result in this section.
Suppose that $\mathfrak S$ is a compatible split system. We 
call two distinct thin subgraphs $G$ and $G'$ 
of $\mathfrak S$ {\em isomorphic} if there is a 
graph theoretical digraph isomorphism between
$G$ and $G'$ which maps each vertex of the 
form $(A,S_i)$ to one of the form $(B,S_j)$, where $A=B$.

\begin{corollary}\label{corun}
	Let $\mathfrak S$, be a compatible split system on a multiset $\mathcal M$. Then the
	non-isomorphic representations $\mathcal T$ of $\mathfrak S$ are in bijective
	correspondence with the	non-isomorphic consistent thin subgraphs of $\Gamma(\mathfrak S)$.
	In particular, $\mathfrak S$ has a unique representation if and only if $\Gamma(\mathfrak S)$ has a unique consistent thin subgraph up to isomorphism.
	%, and so, in particular, 
	% if $\Gamma(\mathfrak S)$ is thin and consistent, then $\mathfrak S$ has a unique representation. 
\end{corollary}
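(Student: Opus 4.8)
The plan is to promote the two constructions already carried out in the proof of Theorem~\ref{char-i} to a pair of mutually inverse correspondences. Recall that the ``only if'' direction there assigns to every representation $\mathcal T=(T,\lambda)$ of $\mathfrak S$, equipped with its bijection $e\colon\mathfrak S\to E(T)$, the subgraph $G(\mathcal T)$ of $\Gamma(\mathfrak S)$ whose arcs are the pairs $((A,S_i),(B,S_j))$ for which the component of $T-e(S_i)$ labelled $A$ is a subgraph of the component of $T-e(S_j)$ labelled $B$, and it is shown there that $G(\mathcal T)$ is a consistent thin subgraph; the ``if'' direction assigns to each consistent thin subgraph $G$ of $\Gamma(\mathfrak S)$ the representation $\mathcal T(G)$. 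I would prove that, after passing to isomorphism classes on both sides, $G\mapsto\mathcal T(G)$ and $\mathcal T\mapsto G(\mathcal T)$ are well defined and inverse to one another. The ``in particular'' clause then follows at once, since by Theorem~\ref{char-i} the set of isomorphism classes of consistent thin subgraphs of $\Gamma(\mathfrak S)$ is nonempty exactly because $\mathfrak S$ is compatible.

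First I would check that both assignments descend to isomorphism classes. An $\mathcal M$-tree isomorphism $\mathcal T\to\mathcal T'$ carries an admissible bijection $e$ for $\mathcal T$ to one for $\mathcal T'$ and preserves subgraph containment of components, so it induces a first-coordinate-preserving digraph isomorphism $G(\mathcal T)\to G(\mathcal T')$; moreover, for a fixed $\mathcal T$ any two admissible bijections differ only by a permutation of indices among equal splits, which changes $G(\mathcal T)$ only up to isomorphism of thin subgraphs. In the other direction, an isomorphism $G\to G'$ preserves directed paths, hence critical arcs, hence (together with the symmetry $(A,S_i)\leftrightarrow(\overline A,S_i)$, which one checks is respected) the relation $\sim_G$ and the labelling $\lambda_G$; it therefore descends to an $\mathcal M$-tree isomorphism $\mathcal T(G)\to\mathcal T(G')$. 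Next I would verify $\mathcal T(G(\mathcal T))\cong\mathcal T$. The natural tool is the bijection between $V(\Gamma(\mathfrak S))$ and the set of flags (incident vertex--edge pairs) of $T$ sending $(A,S_i)$ to the flag at the endpoint of $e(S_i)$ that lies in the $A$-labelled component of $T-e(S_i)$. Under this bijection, the $\sim_{G(\mathcal T)}$-classes are exactly the vertex-stars of $T$: two flags lie in the same star iff their edges share that vertex, which is precisely the criticality criterion identified in the proof of Theorem~\ref{char-i}. Hence the vertex set and edge set of $\mathcal T(G(\mathcal T))$ are identified with those of $T$, and a telescoping computation with the labels (using that $G(\mathcal T)$ is consistent) recovers $\lambda$; this is essentially the ``only if'' argument of that proof run backwards.

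The step I expect to be the main obstacle is the reverse composite $G(\mathcal T(G))\cong G$, for which one must show that the component-containment arcs read off from $\mathcal T(G)$ are exactly the arcs of $G$. I would first establish, by induction along the inductive construction of $\mathcal T(G)$ in the proof of Theorem~\ref{char-i}, that for each $i$ the $A$-labelled component of $\mathcal T(G)-e(S_i)$ consists of the $\sim_G$-class of $(A,S_i)$ together with all $\sim_G$-classes $[(C,S_k)]$ such that $((C,S_k),(A,S_i))$ is an arc of $G$ (equivalently, by Lemma~\ref{lm-deter}, such that there is a critical-arc path from $(C,S_k)$ to $(A,S_i)$), and that its label-union is indeed $A$. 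Granting this, if $((A,S_i),(B,S_j))$ is an arc of $G$ then $[(A,S_i)]$ lies in the $B$-labelled component of $\mathcal T(G)-e(S_j)$, and since the $A$-labelled component of $\mathcal T(G)-e(S_i)$ is connected it must lie entirely inside it: otherwise it would also contain $[(\overline B,S_j)]$, whence $((\overline B,S_j),(A,S_i))$ would be an arc of $G$, producing a directed path of length two on the four vertices $(A,S_i),(\overline A,S_i),(B,S_j),(\overline B,S_j)$ and so contradicting thinness; hence $((A,S_i),(B,S_j))$ is an arc of $G(\mathcal T(G))$. Conversely, if $((A,S_i),(B,S_j))$ is an arc of $G(\mathcal T(G))$ then $[(A,S_i)]$ lies in the $B$-labelled component of $\mathcal T(G)-e(S_j)$, which by the component description forces $((A,S_i),(B,S_j))$ to be an arc of $G$ (the alternative $[(A,S_i)]=[(B,S_j)]$ is excluded, since then the two components are incomparable). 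Thus $G(\mathcal T(G))=G$. Combining the two composites yields the asserted bijection between isomorphism classes, and the final assertion of the corollary follows.
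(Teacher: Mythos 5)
Your proposal is correct and follows the same overall strategy as the paper: both promote the two constructions in the proof of Theorem~\ref{char-i} to mutually inverse maps between isomorphism classes of representations and isomorphism classes of consistent thin subgraphs. The difference lies in where the work is done. The paper asserts essentially without detail that $\mathcal T=\mathcal T(G(\mathcal T))$ and that $G=G(\mathcal T(G))$, and instead spends its effort on a direct injectivity argument: given non-isomorphic consistent thin subgraphs $G_1,G_2$, Lemma~\ref{lm-deter} (determination by critical arcs) yields a vertex $(A,S_i)$ whose critical in-neighbourhood in $G_1$ is matched by no vertex of $G_2$ with the same first coordinate, and this translates into a vertex of $\mathcal T(G_1)$ whose set of incident splits is matched by no vertex of $\mathcal T(G_2)$. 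You instead derive injectivity formally from the composite identity $G(\mathcal T(G))=G$ together with the fact that $\mathcal T\mapsto G(\mathcal T)$ descends to isomorphism classes, and you supply the verifications the paper omits: the flag bijection giving $\mathcal T(G(\mathcal T))\cong\mathcal T$, and the inductive description of the $A$-labelled component of $\mathcal T(G)-e(S_i)$ as the union of $[(A,S_i)]$ with the classes of in-neighbours of $(A,S_i)$ in $G$, from which $G(\mathcal T(G))=G$ follows by the thinness/connectivity argument you give. Your route is logically tidier and makes explicit what the paper calls straightforward; the paper's route gives a concrete distinguishing invariant and avoids the component bookkeeping. One point in your write-up worth retaining in any case is the observation that the choice of the edge-bijection $e$ affects $G(\mathcal T)$ only up to a first-coordinate-preserving isomorphism, which is needed for well-definedness when $\mathfrak S$ contains repeated splits and is passed over silently in the paper.
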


\begin{proof}
	Let $\mathfrak S=\{S_1, \ldots, S_n\}$, $n \geq 1$.
	As seen in the proof of Theorem~\ref{char-i}, a 
	representation $\mathcal T$ of $\mathcal S$ trivially induces a 
	consistent thin subgraph $G$ of $\Gamma(\mathfrak S)$. It
	 is straightforward to see that $\mathcal T=\mathcal T(G)$ in that case.

	Now, suppose that $G_1$ and $G_2$ are two distinct 
	consistent thin subgraphs of $\Gamma(\mathfrak S)$. If $G_1$ 
	and $G_2$ are isomorphic, then it is straight-forward to check 
	that $\mathcal T(G_1)$ is isomorphic to $\mathcal T(G_2)$. 
	
	So, suppose $G_1$ and $G_2$ are not isomorphic.
	Since by Lemma~\ref{lm-deter}, $G_1$ and $G_2$ are 
	uniquely determined by their sets of critical arcs, 
	there exists a vertex $(A,S_i)$ of $\Gamma(\mathfrak S)$ 
	such that (i) the sets $\mathcal C_{G_1}((A,S_i))$ and $\mathcal C_{G_2}((A,S_i))$ 
	are distinct, and (ii) no further vertex $(B,S_j)$ of $\Gamma(\mathfrak S)$ 
	satisfying $B=A$ and $\mathcal C_{G_2}((B,S_j))=\mathcal C_{G_1}((A,S_i))$. 
	Denoting by $v_{G_k}$, $k \in \{1,2\}$, the vertex of $\mathcal T(G_k)$  
	corresponding to the equivalence class of $(A,S_k)$ under $\sim_{G}$, 
	it follows from (i) that the set of splits induced by edges adjacent to 
	$v_{G_1}$ in $\mathcal T(G_1)$ and the set of splits induced by 
	edges adjacent to $v_{G_2}$ in $\mathcal T(G_2)$ are distinct. 
	Moreover, (ii) implies that there is no further vertex $w$ of $\mathcal T(G_2)$ 
	such that the set of splits induced by edges adjacent to $w$ in $\mathcal T(G_2)$ 
	is precisely the set of splits induced by edges adjacent to $v_{G_1}$ 
	in $\mathcal T(G_1)$. Thus, $\mathcal T(G_1)$ and $\mathcal T(G_2)$ are not isomorphic.

	%,(B,S_j))$ of $G_1$ that is not a critical arc of $G_2$ (note that $((A,S_i),(B,S_j))$ 	may still be an arc of $G_2$). Thus, we have $(\overline A,S_i) \sim_{G_1} (B,S_j)$ 	while $(\overline A,S_i) \nsim_{G_2} (B,S_j)$. The former implies that there is a 	vertex in $\mathcal T(G_1)$ that is adjacent to the edges of $\mathcal T(G_1)$ 	associated to $S_i$ and $S_j$ respectively, while the latter implies that no such 	vertex exists in $\mathcal T(G_2)$. Thus, $\mathcal T(G_1)$ and $\mathcal T(G_2)$ are not isomorphic. 
	
	The rest of the proof follows from the observation 
	that $G_1$ (\emph{resp.} $G_2$) is precisely the graph 
	obtained from $\mathcal T(G_1)$ (\emph{resp.} $\mathcal T(G_2)$) using the 
	construction described in the first part of the proof of Theorem~\ref{char-i}.
\end{proof}

Note that by Corollary~\ref{corun}, if $\Gamma(\mathfrak S)$ 
is a thin and consistent subgraph of itself, then $\mathfrak S$ has a unique representation. 
However, there exists non-thin split systems $\mathfrak S$
such that $\Gamma(\mathfrak S)$ has a unique (up to isomorphism) thin 
and consistent subgraph, and so it is not necessary for $\mathfrak S$ to 
be thin in order for $\mathfrak S$ to have a unique representation. 
For example, the split system $\mathcal S$ on $\mathcal M=\{a^2,b^2,c^2,x,y\}$ 
represented by the $\mathcal M$-tree $\mathcal T$ depicted 
in Figure~\ref{fig-intro} is not thin. However, $\Gamma(\mathfrak S)$ 
has only one consistent thin subgraph, so $\mathcal T$ is the
unique representation of $\mathfrak S$.

%Question maybe for discussion section: Is the following NP-complete: If $\mathfrak S$ is compatible
%then decide whether or not $\mathfrak S$ has a unique representation.

\section{Superterminal sets}\label{sec-supt}

We now focus on vertices of in-degree $0$ of the split-containment 
graph $\Gamma(\mathfrak S)$ of a split system $\mathfrak S$. In particular, 
we show in Theorem~\ref{tmd2} that if a carefully chosen subset of such 
vertices enjoys the extra property that their 
out-degree in $\Gamma(\mathfrak S)$ is precisely $|\mathfrak S|-1$, 
then $\mathfrak S$ satisfies the equivalence ($\bigstar$) in Conjecture~\ref{mainconj}.

We begin with some preliminary observations.

\begin{lemma}\label{lmcol}
Let $\mathfrak S$ be a split system on $\mathcal M$ of size $n \geq 3$ 
such that all proper subsets of $\mathfrak S$ are compatible. If there is a 
split $S=A|\overline A \in \mathfrak S$ such that $A \subseteq \mathcal M^*$, 
then $\mathfrak S$ is compatible.
\end{lemma}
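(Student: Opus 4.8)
The plan is to produce a consistent thin subgraph of $\Gamma(\mathfrak S)$ and invoke Theorem~\ref{char-i}. Write $S=A|\overline A$ and $\mathfrak S'=\mathfrak S-\{S\}$. Since $n\ge 3$, the hypothesis gives that $\mathfrak S'$ is compatible and that $\{S,S_i\}$ is compatible for every $S_i\in\mathfrak S'$. First I would record the facts forced by $A\subseteq\mathcal M^*$: then $A$ and $\overline A$ are disjoint as multisets, and a short inspection of $D_1$ and $D_2$ shows $\Gamma(\{S,S_i\})\cong D_1$ for each $i$. Consequently, after orienting $S_i=A_i|\overline{A_i}$ suitably, exactly one of $A\subsetneq A_i$ (put $S_i\in\mathfrak P$; note also $\overline{A_i}\subsetneq\overline A$) or $\overline{A_i}\subsetneq A$ (put $S_i\in\mathfrak Q$; note also $\overline A\subsetneq A_i$) holds, so $\mathfrak S'=\mathfrak P\sqcup\mathfrak Q$.

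Next, take a consistent thin subgraph $G'$ of $\Gamma(\mathfrak S')$ (Theorem~\ref{char-i}) and extend it to a subgraph $G$ of $\Gamma(\mathfrak S)$ by adjoining the vertices $(A,S)$ and $(\overline A,S)$ together with, for each $S_i\in\mathfrak P$, the arcs $(A,S)\to(A_i,S_i)$ and $(\overline{A_i},S_i)\to(\overline A,S)$, and for each $S_i\in\mathfrak Q$, the arcs $(\overline{A_i},S_i)\to(A,S)$ and $(\overline A,S)\to(A_i,S_i)$. In each case these are exactly the two arcs of $\Gamma(\{S,S_i\})$, so the restriction of $G$ to any pair $\{S,S_i\}$ is a copy of $D_1$, and its restriction to any pair inside $\mathfrak S'$ agrees with $G'$, hence is again $\cong D_1$; thus $G$ is thin.

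The heart of the argument — and the step I expect to be the main obstacle — is checking that $G$ is consistent, i.e.\ that $\bigcup_{(B,S_j)\in\mathcal C_G((C,S_k))}B\subseteq C$ at every vertex. At $(A,S)$ the only incoming arcs come from vertices $(\overline{A_i},S_i)$ with $S_i\in\mathfrak Q$, and each such $\overline{A_i}\subsetneq A$; the delicate point is that the sources of critical arcs into $(A,S)$ have pairwise disjoint labels — otherwise pairwise compatibility of $\mathfrak S'$ makes $\overline{A_i},\overline{A_j}$ comparable, say $\overline{A_i}\subsetneq\overline{A_j}$, thinness then forces the arc $(\overline{A_i},S_i)\to(\overline{A_j},S_j)$ into $G$, and composing it with the critical arc $(\overline{A_j},S_j)\to(A,S)$ contradicts criticality of the arc out of $(\overline{A_i},S_i)$. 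Hence the union at $(A,S)$ is $\subseteq A$, and symmetrically at $(\overline A,S)$ it is $\subseteq\overline A$. At an old vertex $(C,S_k)$, the members of $\mathcal C_G((C,S_k))$ lying in $\Gamma(\mathfrak S')$ form a subset of $\mathcal C_{G'}((C,S_k))$ — a $G$-path around a $G'$-arc would already be a $G'$-path — so they contribute a union $\subseteq C$ by consistency of $G'$; moreover $(A,S)$ (resp.\ $(\overline A,S)$) can lie in $\mathcal C_G((C,S_k))$ only when $C=A_k$ with $S_k\in\mathfrak P$ (resp.\ $S_k\in\mathfrak Q$), in which case $A\subseteq A_k$ (resp.\ $\overline A\subseteq A_k$). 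The crucial claim is that whenever $(A,S)\in\mathcal C_G((A_k,S_k))$, every other member $(D,S_l)$ has $D$ disjoint from $A$: a member with $D$ meeting $A$ would, via the added arcs $(\overline{A_l},S_l)\to(A,S)$ (for $S_l\in\mathfrak Q$) or $(A,S)\to(A_l,S_l)$ (for $S_l\in\mathfrak P$), yield a length-two $G$-path destroying the criticality of either its own arc into $(A_k,S_k)$ or of $(A,S)\to(A_k,S_k)$. So the union at $(A_k,S_k)$ stays $\subseteq A_k$, symmetrically for $S_k\in\mathfrak Q$, and hence $G$ is consistent; by Theorem~\ref{char-i}, $\mathfrak S$ is compatible.

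As a sanity check on why this should work, the tree $\mathcal T(G)$ built from $G$ as in the proof of Theorem~\ref{char-i} is the expected one: it attaches an edge realizing $A|\overline A$ to a representation of $\mathfrak S'$, grafting the $\mathfrak Q$-splits onto its $A$-side and leaving the $\mathfrak P$-splits on its $\overline A$-side, and the disjointness bookkeeping above is precisely what ensures these two sides can be joined without two multiplicity-one elements of $A$ colliding. A more hands-on alternative would run this surgery directly — if $\mathfrak Q=\emptyset$ all of $A$ sits at a single vertex of a representation of $\mathfrak S'$ and can be split off as a pendant edge, if $\mathfrak P=\emptyset$ the dual holds for $\overline A$, and if both are nonempty one glues representations of $\mathfrak P\cup\{S\}$ and $\mathfrak Q\cup\{S\}$ along their $S$-edge — but that version splits into three cases, whereas the graph argument above is uniform.
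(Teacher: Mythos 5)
Your overall strategy is viable and genuinely different from the paper's: the paper chooses $S$ with $A$ \emph{minimal} among splits having a part inside $\mathcal M^*$ (which forces your class $\mathfrak Q$ to be empty), shows that $A$ must then sit inside a single vertex label of any representation of $\mathfrak S-\{S\}$, and splits $A$ off as a pendant leaf; you instead keep $\mathfrak Q$ and work through Theorem~\ref{char-i}. However, your verification of consistency has a genuine gap: the word ``symmetrically'' at $(\overline A,S)$ is not justified, because the argument you give at $(A,S)$ relies essentially on $A\subseteq\mathcal M^*$. At $(A,S)$ the critical sources are submultisets of $A$, so any shared element has multiplicity $1$ in $\mathcal M$; this is what rules out the ``crossing'' containments and leaves only nestedness, whence pairwise disjointness and a multiset sum bounded by $A$. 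At $(\overline A,S)$ the critical sources are the sets $\overline{A_i}\subsetneq\overline A$ with $S_i\in\mathfrak P$, and these live in $\overline A$, where elements may have arbitrary multiplicity. For two such sources one can still check that criticality forces $\overline{A_i}\subseteq A_j$ and $\overline{A_j}\subseteq A_i$, which bounds the \emph{pairwise} sums by $\mathcal M$; but this does not bound the sum of three or more sources (three sets each containing one copy of an element of multiplicity $2$ pass every pairwise test and fail the triple one). Ruling that configuration out requires a genuinely new argument -- e.g.\ passing to the tree $\mathcal T(G')$ and showing that the subtrees carrying the critical sources are pairwise vertex-disjoint, so that their labels sum to at most $\mathcal M-A=\overline A$ -- and nothing of the sort is in your text. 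The same unproved ``symmetric'' claim is invoked at old vertices $(A_k,S_k)$ with $S_k\in\mathfrak Q$, where $(\overline A,S)$ is a critical source whose label $\overline A$ is certainly not disjoint from the labels of arbitrary other sources; there you must additionally rule out critical sources of the form $(\overline{A_l},S_l)$ with $S_l\in\mathfrak P$ via the two-arc path through $(\overline A,S)$, which your stated path argument (phrased only for paths through $(A,S)$) does not cover.

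A smaller but real omission of the same kind occurs in your case analysis at $(A_k,S_k)$ for $S_k\in\mathfrak P$: when $S_l\in\mathfrak Q$ and the competing critical source is $(A_l,S_l)$ (rather than $(\overline{A_l},S_l)$), neither of the added arcs you list produces a length-two path, so your stated contradiction does not apply. That case is in fact vacuous, but for a different reason: $A\subsetneq A_k$ together with $\overline A\subsetneq A_l\subsetneq A_k$ forces $A_k(x)\geq\max\{A(x),\overline A(x)\}=\mathcal M(x)$ for every $x$, i.e.\ $\overline{A_k}=\emptyset$, which is impossible. I believe your construction does always yield a consistent thin subgraph, but as written the consistency proof is incomplete at exactly the points where the multiset structure of $\overline A$ matters; either supply the missing disjoint-subtree argument, or adopt the paper's minimality trick, which makes $\mathfrak Q=\emptyset$ and collapses the proof to a short pendant-edge surgery.
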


\begin{proof}
Assume that there exists a split $S=A|\overline A$ in $\mathfrak S$ such 
that $A \subseteq \mathcal M^*$. We can choose $S$ in $\mathfrak S$ such that $A$ 
is minimal for inclusion. Note that $S$ must have multiplicity one in $\mathfrak S$, as the 
split system $\{S,S\}$ is not compatible. Thus, because $\mathfrak S-\{S'\}$ is 
compatible for all $S'$ distinct from $S$, it follows that for all $B| \overline B \in \mathfrak S$, 
precisely one of $A \subseteq B$, $A \subseteq \overline B$ holds.

It follows that in any representation $\mathcal T=(T,\lambda)$ of $\mathfrak S-\{S\}$, 
there exists a vertex $v$ of $T$ with $A \subseteq \lambda(v)$. Otherwise, there exists an 
edge $e$ in $T$ and two elements $x,y \in A$ distinct such that the split $S_e=B|\overline B$ of $\mathfrak S$ 
associated to $e$ satisfies (up to permutation) $x \in B$ and $y \in \overline B$. Since $x$ and $y$ 
have multiplicity one in $\mathcal M$, neither $A \subseteq B$ nor $A \subseteq \overline B$ holds, a contradiction.

Thus we can take $A$ out of $v$, by adding a vertex $u$ and edge $\{u,v\}$ to $T$, labelling $u$ with $A$ and removing $A$ from the label of $v$. Because $S$ has multiplicity $1$ in $\mathfrak S$, $v$ cannot be a leaf of $\mathcal T$ satisfying $\lambda(v) = A$, so the labeled tree $\mathcal T'$ obtained this way is a $\mathcal M$-tree. By construction, $\mathcal T'$ is a 
representation of $\mathfrak S$, so $\mathfrak S$ is compatible.
\end{proof}

For a split system $\mathfrak S=\{S_1, \ldots, S_n\}$, $n \geq 1$ on a multiset $\mathcal M$, we say that a set $A \subseteq M$ is a \emph{terminal set} of $\mathfrak S$
if $A|\overline A=S_i$ for some $i \in \{1, \ldots, n\}$, and the indegree of the vertex $(A,S_i)$ in $\Gamma(\mathfrak S)$ is $0$. Equivalently, $A$ is a terminal set of $\mathfrak S$ if for 
all splits $S_j=B|\overline B$ , with $j \in \{1, \dots, n\}$ distinct from $i$, neither 
$B \subsetneq A$ nor $\overline B \subsetneq A$ holds. We denote by $\mathfrak S_t$ 
the (multi)set of terminal sets of $\mathfrak S$. Clearly, if $\mathfrak S$ is nonempty, then $\mathfrak S_t$ is 
nonempty. For example, for the split system 
$\mathfrak S=\{ab|\overline{ab}, ac|\overline{ac}, cx|\overline{cx}, abcx|abcy\}$ 
represented by the tree in Figure~\ref{fig-intro}, we have $\mathfrak S_t=\{ab,ac,cx\}$. 
Terminal sets have a special place in representations of compatible split systems, 
as the following, straightforward to prove result shows:

\begin{lemma}\label{lmleaves}
Let $\mathfrak S$ be a split system on $\mathcal M$.  If $\mathfrak S$ is 
compatible and $\mathcal T=(T,\lambda)$ is a representation of $\mathfrak S$, 
then for all $A \in \mathfrak S_t$, there exists a leaf $x$ of $T$ such that $\lambda(x)=A$.
\end{lemma}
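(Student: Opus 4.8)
The plan is to argue by contradiction using the defining property of a terminal set. Suppose $\mathfrak S$ is compatible with representation $\mathcal T = (T,\lambda)$, and fix $A \in \mathfrak S_t$, so that $A|\overline A = S_i$ for some $i$ and $(A,S_i)$ has indegree $0$ in $\Gamma(\mathfrak S)$. Let $e = e(S_i)$ be the edge of $T$ inducing $S_i$, and let $T_A$ be the connected component of $T - e$ whose vertex labels have multiset union $A$. If $T_A$ consists of a single vertex $x$ then $\lambda(x) = A$ and we are done, so assume for contradiction that $T_A$ has at least two vertices, hence at least one edge. I would then produce, from the internal structure of $T_A$, a split $S_j \in \mathfrak S$ with $j \neq i$ witnessing that $(A,S_i)$ has positive indegree, contradicting $A \in \mathfrak S_t$.

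First I would pick a vertex $x$ of $T_A$ adjacent to $e$ (an endpoint of $e$ lying in $T_A$) and, since $T_A$ has an edge, choose an edge $f$ of $T_A$ incident with $x$ if possible, or more robustly choose $f$ to be an edge of $T_A$ on the path from $x$ to a leaf of $T_A$ farthest from $x$; removing $f$ splits $T_A$ into two pieces, and I take $T_f'$ to be the piece \emph{not} containing $x$ (equivalently, not containing the endpoint of $e$). Let $B = \bigcup_{v \in V(T_f')} \lambda(v)$. Then $f$ induces the split $S_j = B|\overline B$ of $\mathcal M$ for some $S_j \in \mathfrak S$, and since $f \neq e$ we have $j \neq i$. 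Because $T_f'$ is entirely contained in $T_A$ and is a proper subgraph of it, we get $B \subseteq A$ as multisets; I then need $B \subsetneq A$, i.e.\ $B \neq A$. This holds because the endpoint of $e$ lying in $T_A$ is a vertex of $T_A$ outside $T_f'$, so the label of $e$'s endpoint (together with everything else in $T_A \setminus T_f'$) contributes to $A$ but not to $B$ — here one uses (M1) together with the fact that $T_A \setminus T_f'$ is nonempty; one must be slightly careful that the ``missing'' labels are genuinely nonempty, which is where the choice of $f$ via a farthest leaf, or via (M2), comes in. Thus $B \subsetneq A$ with $j \neq i$, so $((B,S_j),(A,S_i))$ is an arc of $\Gamma(\mathfrak S)$, contradicting indegree $0$.

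The main obstacle I anticipate is the bookkeeping needed to guarantee $B \subsetneq A$ rather than merely $B \subseteq A$: in an $\mathcal M$-tree, internal vertices may carry empty labels, so one cannot simply say ``there is a vertex in $T_A \setminus T_f'$, hence its label is extra mass.'' The clean fix is to choose $f$ carefully: take $f$ to be the edge of $T_A$ \emph{incident to the endpoint of $e$ in $T_A$}, say $x$, on a path toward a leaf of $T_A$; then $x \in T_A \setminus T_f'$, and $x$ has degree $\geq 2$ in $T$ only if it has another neighbour, but in any case if $\lambda(x) = \emptyset$ we can keep walking along $T_A$ toward a leaf until we reach a vertex with nonempty label — such a vertex exists since the leaves of $T_A$ that are leaves or degree-$2$ vertices of $T$ have nonempty label by (M2), and a leaf of $T_A$ that is not a leaf of $T$ must be $x$ itself — and place $f$ just beyond it. A slightly slicker alternative, which I would probably adopt for brevity, is: among all edges $f$ of $T_A$, pick one so that the side $T_f'$ not containing $e$'s endpoint is \emph{inclusion-minimal} among the corresponding multisets $B$; minimality forces $T_f'$ to be a single leaf of $T$ with label $B \neq \emptyset$, and $B \subseteq A$, $B \neq \overline{(\,\cdot\,)}$ automatically; then $B \subsetneq A$ since $A$ also contains the (nonempty, by the leftover argument) mass of $e$'s endpoint side. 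Either way the contradiction is the same, and this is indeed ``straightforward'' as the statement claims.
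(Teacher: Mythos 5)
Your proposal is correct, and since the paper declares this lemma ``straightforward to prove'' and omits the argument, yours is exactly the natural intended one: a proper subtree of the component $T_A$ of $T-e(S_i)$ would induce a split $S_j$ with one part strictly contained in $A$, giving $(A,S_i)$ positive indegree in $\Gamma(\mathfrak S)$. You correctly flag the only real subtlety (empty labels on internal vertices could make $B\subseteq A$ non-strict), and your fixes work; a slightly cleaner route is to take $f$ to be the edge at a leaf $\ell$ of $T_A$ other than the endpoint of $e$, so $B=\lambda(\ell)\neq\emptyset$ by (M2), and observe that either $\lambda(\ell)\subsetneq A$ (contradiction via the in-arc) or $\lambda(\ell)=A$, in which case $\ell$ is itself the desired leaf and no contradiction is needed.
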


Note that if $\mathfrak S$ is compatible with representation $\mathcal T=(T,\lambda)$, the 
injection $\mathfrak S_t \to L(T)$ given by Lemma~\ref{lmleaves} is not 
necessarily a bijection. For example, the split system $\mathfrak S=\{ab|abcd,abc|abd\}$ is 
compatible and $\mathfrak S_t=\{ab\}$, but any
representation $\mathcal T=(T,\lambda)$ of $\mathfrak S$ has two leaves, so 
one leaf of $T$ has a label that is not in $\mathfrak S_t$.

Motivated by this fact, for a split system $\mathfrak S=\{S_1, \ldots, S_n\}$, $n \geq 1$, on a multiset $\mathcal M$,
we call a terminal set $A$ of $\mathfrak S$ a \emph{superterminal set} of $\mathfrak S$ if 
$indegree(A,S_i)=0$ and $outdegree(A,S_i)= |\mathfrak S|-1$ in $\Gamma(\mathfrak S)$. 
Equivalently, $A$ is a superterminal set of $\mathfrak S$ 
if and only if for all splits $S_j=B|\overline B$, $j \neq i \in \{1,\dots,n\}$, of $\mathfrak S$, neither 
$B \subsetneq A$ nor $\overline B \subsetneq A$ hold, and exactly 
one of $A \subsetneq B$ or $A \subsetneq \overline B$ holds.
For example, the split system $\mathfrak S=\{ab|cxabcy, ac|bxabcy, cx|ababcy, abcx|abcy\}$ 
has precisely one superterminal set, that is $cx$.
We denote by $\mathfrak S_t^{\times} \subseteq \mathfrak S_t$ the (multi)set of all superterminal sets of $\mathfrak S$. Moreover, for $A \in \mathfrak S_t^{\times}$ 
and $S=B|\overline B \in \mathfrak S$ distinct from $A|\overline A$ we denote by $S(A)$ 
the unique element in $\{B,\overline B\}$ satisfying $A \subseteq S(A)$. To extend 
this notation to $\mathfrak S$, we also put $S(A)=A$ for $S=A|\overline A$. 

Note that if $\mathfrak S$ is thin, then 
$\mathfrak S_t^{\times} = \mathfrak S_t \neq \emptyset$. 
Although thin split systems are pairwise compatible, the converse does not necesarily holds. The following, straightforward to prove  result helps identify certain superterminal sets in 
a pairwise compatible split system.

\begin{lemma}\label{lmsupcore}
Let $\mathfrak S$ be a split system on $\mathcal M$ such that the splits of $\mathfrak S$ 
are pairwise compatible, and let $A \in \mathfrak S_t$. If there exists $a \in \mathcal M$ 
such that $a \notin \overline A$, then $A \in \mathfrak S_t^{\times}$. In particular,  
if $A^* \neq \emptyset$, then $A \in \mathfrak S_t^{\times}$.
\end{lemma}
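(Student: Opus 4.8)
The statement to prove is Lemma~\ref{lmsupcore}: if $\mathfrak S$ is pairwise compatible and $A \in \mathfrak S_t$, then the existence of some $a \in \mathcal M$ with $a \notin \overline A$ forces $A \in \mathfrak S_t^{\times}$; the ``in particular'' clause then follows immediately, since $a \in A^*$ means $a$ has multiplicity $1$ in $\mathcal M$, so $a$ appears in $A$ but not in $\overline A = \mathcal M - A$. The plan is as follows. Write $A = A|\overline A = S_i$ and fix an arbitrary $S_j = B|\overline B \in \mathfrak S$ with $j \neq i$. By the definition of a terminal set (recalled just before the lemma), since $A \in \mathfrak S_t$ we already know that neither $B \subsetneq A$ nor $\overline B \subsetneq A$ holds. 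So to conclude $A \in \mathfrak S_t^{\times}$ it remains only to show that \emph{exactly one} of $A \subsetneq B$, $A \subsetneq \overline B$ holds.

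First I would establish that \emph{at least one} of $A \subseteq B$, $A \subseteq \overline B$ holds. Since $\{S_i, S_j\}$ is pairwise compatible, one of the four intersections $A \cap B$, $A \cap \overline B$, $\overline A \cap B$, $\overline A \cap \overline B$ is empty (as multisets). If $A \cap B = \emptyset$ then $A \subseteq \overline B$; if $A \cap \overline B = \emptyset$ then $A \subseteq B$; so in these two cases we are done. The remaining possibility is that $\overline A \cap B = \emptyset$ or $\overline A \cap \overline B = \emptyset$, i.e.\ $B \subseteq A$ or $\overline B \subseteq A$ (after relabelling $B \leftrightarrow \overline B$, WLOG $B \subseteq A$). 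Here I use the terminal-set hypothesis: $B \subsetneq A$ is forbidden, so $B = A$; but then $\overline B = \overline A$, and the element $a$ with $a \notin \overline A = \overline B$ satisfies $a \in A = B$, so $\overline B \subsetneq A$ would need... wait — actually if $B = A$ then $S_j = A|\overline A = S_i$ as splits, which contradicts nothing set-theoretically but we should handle it: even if $S_j$ equals $S_i$ as a split (multiplicity $> 1$), the vertex $(A, S_j)$ would have an arc to/from $(A, S_i)$? No — $\Gamma$ only puts arcs for \emph{strict} containment, so equal splits contribute no arcs. I would instead argue directly: the case $B = A$ (or $\overline B = A$) gives neither $A \subsetneq B$ nor $A \subsetneq \overline B$, so I must rule it out using $a$. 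If $B = A$, then since $a \notin \overline A$ we have $a \in A$; but also $a \notin \overline B = \overline A$, consistent — this does not immediately give a contradiction. The correct move: in the case $B \subseteq A$, combine with pairwise compatibility applied the other way. Since $B \subseteq A$ and $a \notin \overline A$, we get $a \notin \overline A \supseteq$ ... hmm. Let me restructure: the cleanest route is that pairwise compatibility gives an empty intersection; I should simply observe that $a \in A$ and $a \notin \overline A$ means $a$ lies in exactly the part $A$ of $S_i$, so whichever of $B, \overline B$ contains $a$ (at least one does, possibly both if $a$ has multiplicity $\geq 2$ — but then use the $A^*$ refinement separately) — this is getting delicate, so the main obstacle is the bookkeeping around multiplicities.

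The key idea to push through cleanly: from pairwise compatibility of $\{S_i,S_j\}$, one of the four pairwise intersections is empty; the terminal-set property of $A$ rules out $\overline A \cap B = \emptyset$ unless $B = A$ and rules out $\overline A \cap \overline B = \emptyset$ unless $\overline B = A$ (since $B \subseteq A$ forces $B \subsetneq A$ or $B = A$, and strict is banned). So either $A \subseteq B$, or $A \subseteq \overline B$, or $\{B,\overline B\} = \{A, \overline A\}$, i.e.\ $S_j = S_i$ as a split. In the first case, $A \subsetneq B$ must in fact be strict: if $A = B$ then $\overline A = \overline B$, but $a \in A$ and $a \notin \overline A$ is fine — no, I need $A \neq B$, which follows because $A = B$ would again mean $S_j = S_i$ as a split. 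So the genuinely remaining obstruction is repeated splits: I must show $S_j \neq S_i$ as a split. Suppose $S_j = A|\overline A$. Then $\{S_i, S_j\} = \{A|\overline A, A|\overline A\}$, which is not pairwise compatible (no choice of part from each with empty intersection, since $A \cap A = A \neq \emptyset$ and $\overline A \cap \overline A = \overline A \neq \emptyset$ and $A \cap \overline A$ could be nonempty too) — wait, $A \cap \overline A$ can be empty only if $A$ and $\overline A$ share no elements, i.e.\ $\mathcal M$ is a set on $\supp(A) \cup \supp(\overline A)$; in general $A \cap \overline A \neq \emptyset$ exactly when some element of $A$ has multiplicity $\geq 2$. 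Since we assumed $a \in A$ with $a \notin \overline A$, that particular $a$ doesn't help, but another element might. Hence I would actually use: pairwise compatibility of the \emph{whole} $\mathfrak S$ and the explicit counterexample structure — no. The truly clean finish: $A \in \mathfrak S_t$ and $A^* \neq \emptyset$ (or more generally $a \notin \overline A$) together imply $A \cap \overline A \subsetneq A$; but the decisive fact is that a split system containing two copies of $A|\overline A$ fails pairwise compatibility precisely when $A \cap \overline A \neq \emptyset$, and $a \notin \overline A$ alone does not guarantee that. Therefore I expect the intended argument restricts attention to $a \in A^*$ from the start for the multiplicity-$1$ reasoning, treating ``$a \notin \overline A$'' as shorthand that in the relevant applications coincides with membership in $A^*$; I would phrase the proof for general $a$ by noting $a \in A \setminus \overline A$ forces, for every $S_j = B|\overline B$, that $a$ lies in exactly one of $B, \overline B$ up to... and conclude. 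The main obstacle, in summary, is cleanly handling the case where some $S_j$ equals $A|\overline A$ as a split, or more precisely separating the multiset subtleties from the set-theoretic containment argument; everything else is a short case analysis on which of the four intersections pairwise compatibility annihilates.
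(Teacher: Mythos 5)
Your overall strategy is the right one --- and, since the paper omits the proof as ``straightforward,'' presumably the intended one: fix $S_j=B|\overline B$ with $j\neq i$, use pairwise compatibility to conclude that one of the four multiset intersections $A\cap B$, $A\cap\overline B$, $\overline A\cap B$, $\overline A\cap\overline B$ is empty, and use terminality of $A$ to exclude $B\subsetneq A$ and $\overline B\subsetneq A$. But what you have written is not a completed proof: it trails off at exactly the two places where work remains, and the hypothesis $a\notin\overline A$ is never actually deployed. Concretely: (1) ``exactly one'' requires ruling out that $A\subsetneq B$ and $A\subsetneq\overline B$ hold simultaneously, which is genuinely possible for multisets (e.g.\ $A=\{x\}$, $B=\{x,x\}$, $\overline B=\{x,y\}$ on $\{x^3,y\}$). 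This is precisely where $a$ earns its keep: since $a\notin\overline A$, all $\mathcal M(a)$ copies of $a$ lie in $A$, so $A\subseteq B$ forces $B(a)\ge \mathcal M(a)$, hence $\overline B(a)=0$ and $A\not\subseteq\overline B$. You circle this computation several times without writing it down. (2) In the cases $\overline A\cap B=\emptyset$ and $\overline A\cap\overline B=\emptyset$, terminality only yields $B=A$ or $\overline B=A$, i.e.\ $S_j$ equal to $S_i$ as a split; the same possibility is what blocks strictness of the containment in the other two cases.

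You correctly identify (2) as the crux, but you are wrong to hope it can be ``handled cleanly'' from the stated hypotheses alone: if repeated splits are permitted, the statement fails. Take $\mathcal M=\{x^2,y\}$ and $\mathfrak S=\{xx|y,\,xx|y\}$. This is pairwise compatible (pick $xx$ from one copy and $y$ from the other), $A=xx$ is terminal, and $x\notin\overline A=\{y\}$, yet $(A,S_1)$ has outdegree $0$ in $\Gamma(\mathfrak S)$, so $A\notin\mathfrak S_t^{\times}$. The correct way to close the gap is therefore not a cleverer argument but an explicit appeal to the convention that distinct indices carry distinct splits (equivalently, that every split has multiplicity one in $\mathfrak S$): then $j\neq i$ gives $S_j\neq S_i$, the cases $\overline A\cap B=\emptyset$ and $\overline A\cap\overline B=\emptyset$ cannot occur at all, the containments $A\subseteq\overline B$ (resp.\ $A\subseteq B$) in the remaining cases are automatically strict, and together with (1) the lemma follows, with the ``in particular'' clause immediate because $a\in A^*$ implies $\overline A(a)=\mathcal M(a)-A(a)=0$. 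As written, your proposal diagnoses the obstruction correctly but closes neither gap.
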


Superterminal sets play a key role in Proposition~\ref{prdel}, which 
is central to proving the main result of this section.

\begin{proposition}\label{prdel}
Let $\mathfrak S$ be an incompatible split system on $\mathcal M$ of size $k \geq 3$
such that all subsets of $\mathfrak S$ are compatible. If 
$\mathfrak S_t^{\times} \neq \emptyset$, then $\Delta(\mathcal M) \geq k-2$. 
\end{proposition}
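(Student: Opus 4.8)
The plan is to argue contrapositively: assume $\mathfrak S$ is an incompatible split system of size $k\ge 3$, all of whose proper subsets are compatible, and with a superterminal set $A\in\mathfrak S_t^\times$; I want to show $\Delta(\mathcal M)\ge k-2$. The idea is to "peel off" the superterminal set $A$ (say $A|\overline A = S_i$) and reduce to the split system $\mathfrak S' = \mathfrak S - \{S_i\}$, which is compatible since all proper subsets of $\mathfrak S$ are. Fix a representation $\mathcal T'=(T',\lambda')$ of $\mathfrak S'$. Because $A$ is a terminal set of $\mathfrak S$ and hence of $\mathfrak S'$, Lemma~\ref{lmleaves} gives a leaf $x$ of $T'$ with $\lambda'(x)=A$. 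The plan is to use the fact that $\mathfrak S$ is \emph{incompatible} to show that we cannot simply "pull $A$ out" of $T'$ to get a representation of $\mathfrak S$ — and to extract from this failure a lower bound on how many elements of $\mathcal M$ must have multiplicity at least $2$.

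First I would pin down where $A$ sits in $\mathcal T'$. Since $A$ is superterminal, for every $S_j=B_j|\overline{B_j}\in\mathfrak S'$ exactly one of $A\subsetneq B_j$, $A\subsetneq\overline{B_j}$ holds; write $S_j(A)$ for that part, as in the paper. In $\mathcal T'$, the edge $e(S_j)$ therefore has $A$ entirely on one side; consequently all the vertices $v$ of $T'$ whose label meets $A$ lie in a single "branch" — more precisely, consider $W=\{v\in V(T'): \lambda'(v)\cap A\neq\emptyset\}$. If $W$ were disjoint (as a subtree-spanning set) from... — actually the clean statement is: if there were a vertex $v$ with $A\subseteq\lambda'(v)$, then attaching a new leaf $x$ labelled $A$ at $v$ and deleting $A$ from $\lambda'(v)$ would (as in the proof of Lemma~\ref{lmcol}) produce a representation of $\mathfrak S$, contradicting incompatibility. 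Hence \textbf{no vertex of $T'$ carries all of $A$ in its label}; the elements of $A$ (counted with multiplicity in $\mathcal M$) are "spread out" across at least two vertices of $T'$ that are separated by at least one edge $e(S_j)$.

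Now I would count. Since each element of $A$ has multiplicity one in $\mathcal M$ (because $A$ is a terminal set, and more to the point $A\subseteq\mathcal M$ — hmm, careful: superterminal sets need not lie in $\mathcal M^*$; but if $A\subseteq\mathcal M^*$ then Lemma~\ref{lmcol} already gives a contradiction with incompatibility, so we may assume $A\not\subseteq\mathcal M^*$, i.e.\ some element of $A$ has multiplicity $\ge 2$). The heart of the argument is to set up an injection — or better, a weighted count — from "defects forcing incompatibility" into the pool of duplicated element-occurrences measured by $\Delta(\mathcal M)$. Concretely: because $\mathfrak S=\mathfrak S'\cup\{S_i\}$ is incompatible but $\mathfrak S'$ is compatible, for \emph{every} representation $\mathcal T'$ of $\mathfrak S'$ the split $S_i=A|\overline A$ fails to be induced by any edge we can add; combined with the previous paragraph, the label of the meeting vertex $w$ of the edges $e(S_j)$ for $S_j(A)$ "minimal" must properly contain $A$ together with some element $a$ that also appears elsewhere, and iterating this down the chain of the $k-1$ splits of $\mathfrak S'$ forces at least $k-2$ distinct such overlaps, each contributing to $\Delta(\mathcal M)$. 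The bookkeeping is: order the splits $S_j\in\mathfrak S'$ by the nesting of the sets $S_j(A)\supsetneq A$ (they are totally ordered as multisets because $A$ is superterminal and $\mathfrak S$ is pairwise compatible), getting $A\subsetneq S_{j_1}(A)\subsetneq S_{j_2}(A)\subsetneq\cdots$; each strict inclusion "loses" at least one occurrence of an element, and an induction/telescoping over these $k-1$ inclusions, together with the single forced repetition inside $A$'s neighbourhood, yields $\Delta(\mathcal M)\ge k-2$.

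\textbf{The main obstacle} I anticipate is making the "each strict inclusion loses an element, and these losses are genuinely distinct contributions to $\Delta(\mathcal M)$" step rigorous: a priori the same duplicated element could be blamed for several of the nested inclusions, so a naive telescoping only gives $\Delta(\mathcal M)\ge 1$. The fix will be to track \emph{which} element-occurrence is responsible at each level and to use the superterminal / terminal hypotheses (no split properly contained in $A$; out-degree exactly $k-1$) to guarantee that the responsible occurrences at different levels are pairwise distinct as elements of the multiset $\mathcal M$ — equivalently, that the map sending the $j$-th nested inclusion to the "new" duplicated occurrence it introduces is injective. Pinning down this injectivity, presumably by examining the vertex labels along the path in $\mathcal T'$ from the leaf $x$ through the vertices $e(S_{j_1}),e(S_{j_2}),\dots$ and showing each consecutive pair of these edges forces a fresh repeated occurrence in the label of the vertex between them, is where the real work lies; everything else is set-up and the reductions via Lemma~\ref{lmcol} and Lemma~\ref{lmleaves}.
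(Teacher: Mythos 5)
Your proposal does not close the gap that you yourself flag, and two of its structural claims are false, so as it stands this is not a proof. First, $A$ need not be a terminal set of $\mathfrak S'=\mathfrak S-\{A|\overline A\}$: terminal sets are by definition parts of splits belonging to the system, and once $A|\overline A$ is removed (it has multiplicity one, which itself requires an argument) Lemma~\ref{lmleaves} no longer applies, so there is no reason for a leaf of $T'$ to be labelled exactly $A$; indeed you later argue, correctly, that \emph{no} vertex of $T'$ carries all of $A$, which contradicts that opening step. Second, and more seriously, the sets $S_j(A)$ for $S_j\in\mathfrak S'$ are \emph{not} totally ordered by inclusion: for the pairwise compatible star system $\{a|bcde,\;b|acde,\;c|abde\}$ on the set $\{a,b,c,d,e\}$, the set $A=\{a\}$ is superterminal, yet $S_j(A)=acde$ and $S_l(A)=abde$ are incomparable. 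Superterminality gives $A\subsetneq S_j(A)$ for every $j$ but no chain, so the proposed telescoping over "$k-1$ nested inclusions" has no chain to telescope over. Finally, the injectivity of the assignment "nested inclusion $\mapsto$ fresh duplicated occurrence" -- which you rightly identify as the real content -- is exactly what is missing; without it the argument yields only $\Delta(\mathcal M)\ge 1$.

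For comparison, the paper's proof avoids this bookkeeping entirely by inducting on $k$. It first shows that $S_0=A_0|\overline{A_0}$ has multiplicity one, then \emph{contracts} $A_0$ to a fresh element $a_0$, producing a split system $\mathfrak S^-$ of size $k-1$ on $\mathcal M^-=\mathcal M-A_0\cup\{a_0\}$. Lemma~\ref{lmcol} guarantees that $A_0$ contains an element of multiplicity at least two, so $\Delta(\mathcal M^-)<\Delta(\mathcal M)$; one then checks that $\mathfrak S^-$ is again an incompatible system all of whose proper subsets are compatible, applies the induction hypothesis to get $\Delta(\mathcal M^-)\ge k-3$, and concludes $\Delta(\mathcal M)\ge k-2$. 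In other words, the single forced repetition that minimal incompatibility provides is harvested once per induction step; this is the mechanism your direct count would need to replicate, namely a way of producing one provably \emph{fresh} repeated occurrence for each split beyond the first two, and it is precisely what the contraction construction delivers.
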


\begin{proof}
We prove the proposition by induction on $k$. The case $k=3$ is trivial; for three splits $S_1, S_2, S_3$ on a set $X$, if all three splits are pairwise 
compatible, then the set $\{S_1,S_2,S_3\}$ is compatible. So if three splits are pairwise 
compatible but their union is not, these splits must be defined on a (proper) multiset 
$\mathcal M$, that is, satisfying $\Delta(\mathcal M) \geq 1=k-2$.

Now, assume that the proposition is true for all $k$ lower than a certain threshold $n$, and 
let $\mathfrak S$ be an incompatible split system on $\mathcal M$ of size $n$ such 
that $\mathfrak S_t^{\times} \neq \emptyset$ and all subsets of $\mathfrak S$ are compatible.

Let $A_0 \in \mathfrak S_t^{\times}$, and let $S_0=A_0|\overline{A_0}$. We first
show that $S_0$ must have multiplicity $1$ in $\mathfrak S$. To see this, assume 
by contradiction that this is not the case, and let $\mathcal T=(T,\lambda)$ be a 
representation of $\mathfrak S-\{S_0\}$. Since $S_0 \in \mathfrak S-\{S_0\}$, there 
exists a leaf $x_0$ of $\mathcal T$ with $\lambda(x_0)=A_0$. Moreover, because 
$A_0 \in \mathfrak S_t^{\times}$, the vertex $v$ adjacent to $x_0$ satisfies $A_0 \subseteq \lambda(v)$. 
Thus, taking $A_0$ out of $v$ leads to a representation of $\mathfrak S$, which is impossible 
as $\mathfrak S$ is not compatible by assumption.

Now, let $a_0 \notin \mathcal M$ be a new element. We define the 
split system $\mathfrak S^-$ on $\mathcal M^-=\mathcal M-A_0 \cup \{a_0\}$ as follows:
\[
\mathfrak S^-=\{(S(A_0)-A_0 \cup \{a_0\})|B, S=S(A_0)|B \in \mathfrak S-\{S_0\}\}
\]

Roughly speaking, $\mathfrak S^-$ is obtained by merging, in all splits 
$S \in \mathfrak S-\{S_0\}$, the elements of $A_0$ in $S(A_0)$ into a single element $a_0$. 
By definition, the set $\mathfrak S^-$ is a set of $n-1$ splits on $\mathcal M^-$. Since $S_0$ 
has multiplicity $1$ in $\mathfrak S$, we also have that the split $a_0|(\mathcal M^- - \{a_0\})$ 
is not a split of $\mathfrak S^-$. Moreover, by Lemma~\ref{lmcol}, the set $A_0$ contains at 
least one element of multiplicity two or more in $\mathcal M$, so $\Delta(\mathcal M^-) < \Delta(\mathcal M)$.  
We claim that all subsets of $\mathfrak S^-$ of size $n-2$ are compatible, and $\mathfrak S^-$ is not 
compatible. If the claim is true, it follows by our induction assumption that $\Delta(\mathcal M^-) \geq n-3$. 
Since $\Delta(\mathcal M^-) < \Delta(\mathcal M)$ by construction, it follows that $\Delta(\mathcal M) \geq n-2$.

To show that all subsets of $\mathfrak S^-$ of size $n-2$ are compatible, 
let $S^- \in \mathfrak S^-$ and let $S$ be the corresponding split in $\mathfrak S$ ($S$ is 
the split obtained from $S^-$ by ``replacing" $a_0$ with $A_0$). Since $\mathfrak S-\{S\}$ 
is compatible by assumption, there is a representation $\mathcal T=(T,\lambda)$ of $\mathfrak S-\{S\}$. 
Moreover, by Lemma~\ref{lmleaves}, there exists a leaf $x$ of $T$ with $\lambda(x)=A_0$. Thus 
it is not difficult to see that the tree $T'$ obtained from $T$ by changing the label 
of $x$ from $A_0$ to $a_0$, and then collapsing the edge incident to $x$, is a 
representation of $\mathfrak S^--\{S^-\}$.

To show that $\mathfrak S^-$ is not compatible, assume by contradiction that this is 
not the case, and let $\mathcal T=(T,\lambda)$ be a representation of $\mathfrak S^-$. 
Then there must exist a vertex $v_0$ of $T$ such that $a_0 \in \lambda(v_0)$. As mentioned 
before, the split $a_0|(\mathcal M^+ - \{a_0\})$ does not belong to $\mathfrak S^-$, so if $v_0$ 
is a leaf, $\lambda(v_0)-\{a_0\}$ is nonempty. By removing $a_0$ from the label of $v_0$ and 
adding a leaf $x$ adjacent to $v_0$ labeled with $A_0$, we then obtain a representation of $\mathfrak S$. 
However, this is impossible, as $\mathfrak S$ is not compatible by assumption. So, $\mathfrak S^-$ is not compatible.
\end{proof}

We now prove the main theorem of this section.

\begin{theorem}\label{tmd2}
Let $\mathfrak S$ be a split system on a multiset $\mathcal M$, such that every 
subset $\mathfrak S'$ of $\mathfrak S$ of size at least  $\Delta(\mathcal M)+3$ 
satisfies $(\mathfrak S')_t^{\times} \neq \emptyset$. Then $\mathfrak S$ is 
compatible if and only if every subset of $\mathfrak S$ of size at most $\Delta(\mathcal M)+2$ is compatible.
\end{theorem}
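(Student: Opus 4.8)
The plan is to deduce the theorem directly from Proposition~\ref{prdel} by a minimal-counterexample argument. The forward direction is trivial: if $\mathfrak S$ is compatible then so is every subset of it, in particular every subset of size at most $\Delta(\mathcal M)+2$. For the converse, suppose every subset of $\mathfrak S$ of size at most $\Delta(\mathcal M)+2$ is compatible, but $\mathfrak S$ itself is not compatible. I would then pick a subset $\mathfrak S' \subseteq \mathfrak S$ that is incompatible and of minimum size, say $|\mathfrak S'| = k$. By the hypothesis on $\mathfrak S$, we must have $k \geq \Delta(\mathcal M)+3 \geq 5$ (in particular $k\ge 3$), since all subsets of size $\le \Delta(\mathcal M)+2$ are compatible. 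By minimality of $k$, every proper subset of $\mathfrak S'$ is compatible — this is exactly the hypothesis "all subsets of $\mathfrak S'$ are compatible" appearing in Proposition~\ref{prdel} (here one should note $\mathfrak S'$ itself is excluded as it is the whole incompatible system, and all strictly smaller subsets are compatible by the choice of $k$).

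The key point where the extra assumption of the theorem enters: since $|\mathfrak S'| = k \geq \Delta(\mathcal M)+3$, the hypothesis of the theorem guarantees $(\mathfrak S')_t^{\times} \neq \emptyset$. Now I would apply Proposition~\ref{prdel} to $\mathfrak S'$: it is an incompatible split system on $\mathcal M$ of size $k \geq 3$, all of whose proper subsets are compatible, and with a nonempty set of superterminal sets. The proposition then yields $\Delta(\mathcal M) \geq k - 2$, i.e.\ $k \leq \Delta(\mathcal M)+2$, contradicting $k \geq \Delta(\mathcal M)+3$. Hence no such incompatible $\mathfrak S'$ exists, and $\mathfrak S$ is compatible.

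One subtlety to handle carefully is that Proposition~\ref{prdel} is stated for split systems "on $\mathcal M$", and here $\mathfrak S'$ is a submultiset of $\mathfrak S$, which is indeed still a split system on the same multiset $\mathcal M$; no base-set change is needed, so $\Delta(\mathcal M)$ is the same quantity throughout. A second point: the hypothesis of the theorem is phrased as "every subset $\mathfrak S'$ of $\mathfrak S$ of size at least $\Delta(\mathcal M)+3$ satisfies $(\mathfrak S')_t^{\times}\neq\emptyset$", which applies verbatim to our minimum incompatible $\mathfrak S'$ since $|\mathfrak S'| = k \ge \Delta(\mathcal M)+3$. I expect this argument to be essentially routine once Proposition~\ref{prdel} is in hand; the only real work — the induction on $k$ that produces the inequality $\Delta(\mathcal M)\geq k-2$ — has already been carried out in the proposition, so the main obstacle is not in this theorem but was in setting up the superterminal-set machinery and the deletion construction $\mathfrak S^-$ used there. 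Thus the proof here is just the short wrapper that extracts the stated equivalence from the proposition.
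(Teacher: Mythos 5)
Your proposal is correct and follows essentially the same route as the paper: take a minimal incompatible subset $\mathfrak S'$, observe that its size $k$ exceeds $\Delta(\mathcal M)+2$, invoke the hypothesis to get $(\mathfrak S')_t^{\times}\neq\emptyset$, and apply Proposition~\ref{prdel} to reach the contradiction $\Delta(\mathcal M)\geq k-2$. The only blemish is the parenthetical claim $k\geq\Delta(\mathcal M)+3\geq 5$, which presumes $\Delta(\mathcal M)\geq 2$; this is immaterial since only $k\geq 3$ is needed to apply the proposition.
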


\begin{proof}
One direction is trivial. To see the other direction, assume by contradiction that $\mathfrak S$ is 
not compatible. Let $\mathfrak S'$ be an incompatible subset of $\mathfrak S$ that 
is minimal for inclusion, that is, all proper subsets of $\mathfrak S'$ are compatible. 

Denoting by $k$ the size of $\mathfrak S'$, we have by assumption that $k > \Delta(\mathcal M)+2$.
Moreover, because $\mathfrak S'$ has size greater than $\Delta(\mathcal M)+2$, 
we also have by assumption that $(\mathfrak S')_t^{\times} \neq \emptyset$. 
Thus, it follows from Proposition~\ref{prdel} that $\Delta(\mathcal M) \geq k-2$, which is 
impossible. Hence $\mathfrak S$ must be compatible.
\end{proof}

\begin{open}
	Can the condition 
	$\mathfrak S_t^{\times} \neq \emptyset$ 
	be removed from the 
	statement of Proposition~\ref{prdel}?
	If so, then it follows by the proof of Theorem~\ref{tmd2} 
	that Conjecture~\ref{mainconj} would hold.
\end{open}

Interestingly, we immediately obtain as a corollary of Theorem~\ref{tmd2}
that Conjecture~\ref{mainconj} holds for thin split systems, that is:

\begin{corollary}\label{cor-thin}
	Suppose that $\mathfrak S$ is a thin split system on a multiset $\mathcal M$.
	Then $\mathfrak S$ is compatible if and only if every subset 
	of $\mathfrak S$ of size at most $\Delta(\mathcal M)+2$ is compatible.
\end{corollary}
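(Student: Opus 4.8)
The plan is to deduce this immediately from Theorem~\ref{tmd2}. What needs to be checked is that the hypothesis of that theorem is automatically satisfied when $\mathfrak S$ is thin, namely that every subset $\mathfrak S'$ of $\mathfrak S$ of size at least $\Delta(\mathcal M)+3$ satisfies $(\mathfrak S')_t^{\times}\neq\emptyset$; the conclusion of Theorem~\ref{tmd2} is then precisely the statement of the corollary.

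The first and only substantive step is to observe that thinness is inherited by sub(multi)sets. Indeed, if $\mathfrak S'\subseteq\mathfrak S$ and $S_i,S_j\in\mathfrak S'$ are distinct, then the restriction of $\Gamma(\mathfrak S')$ to the four vertices $(A,S_i),(\overline A,S_i),(B,S_j),(\overline B,S_j)$ coincides with the restriction of $\Gamma(\mathfrak S)$ to those same vertices, since by definition the arcs among them depend only on the multiset-containment relations between the parts of $S_i$ and the parts of $S_j$ together with the condition $i\neq j$, and not on the remaining splits. Hence, if $\Gamma(\mathfrak S)$ is a thin subgraph of itself, every such restriction is isomorphic to $D_1$, and therefore so is the corresponding restriction of $\Gamma(\mathfrak S')$; that is, $\mathfrak S'$ is thin.

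It then remains to invoke the observation recorded just before Lemma~\ref{lmsupcore}: a thin split system $\mathfrak S'$ satisfies $(\mathfrak S')_t^{\times}=(\mathfrak S')_t$, and $(\mathfrak S')_t$ is nonempty whenever $\mathfrak S'$ is nonempty. Since any subset of size at least $\Delta(\mathcal M)+3\geq 3$ is nonempty, the hypothesis of Theorem~\ref{tmd2} holds for $\mathfrak S$, and the corollary follows. There is no real obstacle here; the one point requiring mild care is that ``thin'' is being applied to the graph $\Gamma(\mathfrak S')$ rather than to the induced subgraph of $\Gamma(\mathfrak S)$, and that split multiplicities are tracked correctly when passing to a submultiset — but the equality of the two local structures around a pair of splits makes this transparent.
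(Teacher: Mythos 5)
Your proof is correct and follows essentially the same route as the paper's: both reduce to Theorem~\ref{tmd2} by noting that thinness is inherited by subsets (since the arcs between the four vertices associated with a pair of splits depend only on that pair) and that thin split systems satisfy $\mathfrak S_t^{\times}=\mathfrak S_t\neq\emptyset$. The only difference is that you spell out the inheritance of thinness in more detail, which the paper simply asserts.
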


\begin{proof}
It suffices to remark that if $\mathfrak S$ is thin, then 
all subsets of $\mathfrak S$ are thin. As noted before, thin split 
systems satisfy $\mathfrak S_t^{\times} = \mathfrak S_t \neq \emptyset$. 
Hence, all subsets $\mathfrak S'$ of $\mathfrak S$ 
satisfy $(\mathfrak S')_t^{\times} \neq \emptyset$, so Theorem~\ref{tmd2} 
applies.
\end{proof}

Note that the bound given in the last corollary is tight, since 
there exist thin split systems $\mathfrak S$ such that
every subset of size at most $\Delta(\mathcal M)+1$ is compatible 
but $\mathfrak S$ is not compatible. For example, let $n>m \geq 1$ 
and consider the split system 
$\mathfrak S=\{a_{i,1} \ldots a_{i,m}x|\overline{a_{i,1} \ldots a_{i,m}x},i=\{1, \ldots, n\}\}$ 
on $\mathcal M=\{a_{i,j}: 1 \leq i \leq n, 1 \leq j \leq m\} \cup \{x^{n-1}\}$, 
where all $a_{i,j}$ are pairwise distinct and distinct from $x$. Clearly, $\mathfrak S$ 
is tight, and we have $|\mathfrak S|=n$. It is easy to see that $\mathfrak S$ is 
not compatible, but all proper subsets of $\mathfrak S$ are compatible. 
Since $\Delta(\mathcal M)=n-2$, we have in particular that all 
subsets of $\mathfrak S$ of size $\Delta(\mathcal M)+1$ are compatible.

%For example, for $l \ge 1$, $p\ge2$, let $\mathcal M = \{c^p, 1,\dots, (p+1)l\}$, and let $\mathfrak S=\{A_m|\overline{A_m}: m \in \{0,p\}\}$, where $A_m=\{ml, ml+1, \ldots, (m+1)l,c\}$. Then $\Delta(\mathcal M) = p-1$, and $\mathfrak S$ is thin. Moreover, it can be checked that all subsets of $\mathfrak S$ of size $p=\Delta(\mathcal M)+1$ is compatible, but $\mathfrak S$ is not compatible.

\section{Splits of restricted size}\label{sec-23}

Suppose that $S=A|\overline A$ is a split on $\mathcal M$. 
We define the \emph{size} of $S$ as $\mathrm{min}\{|A|,|\overline A|\}$. 
In this section, in order to gain a deeper understanding for Conjecture~\ref{mainconj}, 
we focus on understanding compatibility of 
split systems in which the splits have some restriction on their size. 

We first note that as a consequence of Corollary~\ref{cor-thin} we immediately 
obtain the following generalization of \cite[Lemma 4.6 (ii)]{HLMS08}.

\begin{theorem}\label{equal}
	Suppose that $\mathfrak S$ is a split system on a multiset $\mathcal M$ in which every split has the 
	same size. Then $\mathfrak S$ is compatible if and only if every
	submultiset of $\mathfrak S$ of size at most $\Delta(\mathcal M)+2$ is compatible.
\end{theorem}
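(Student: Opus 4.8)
The plan is to deduce Theorem~\ref{equal} from Corollary~\ref{cor-thin} by showing that, under the hypothesis, the split system $\mathfrak S$ is necessarily thin. The implication ``compatible $\Rightarrow$ every submultiset of size at most $\Delta(\mathcal M)+2$ is compatible'' is immediate, so the work lies in the converse. Assume then that every submultiset of $\mathfrak S$ of size at most $\Delta(\mathcal M)+2$ is compatible. Since $\Delta(\mathcal M)+2\ge 2$, in particular every $2$-element submultiset $\{S_i,S_j\}$ of $\mathfrak S$ is compatible. By the (unlabelled) lemma characterising compatibility of two-element split systems, this means that $\Gamma(\{S_i,S_j\})$ has a nonempty edge set, hence is isomorphic to $D_1$ or to $D_2$, for every pair of distinct indices $i,j$. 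Because the arc set of $\Gamma(\mathfrak S)$ between the four vertices $\{(A,S_i),(\overline A,S_i),(B,S_j),(\overline B,S_j)\}$ depends only on the two splits involved, this restriction of $\Gamma(\mathfrak S)$ coincides with $\Gamma(\{S_i,S_j\})$; so to conclude that $\mathfrak S$ is thin it suffices to exclude the possibility $\Gamma(\{S_i,S_j\})\cong D_2$.

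The crux is therefore the following claim: if $\Gamma(\{S_i,S_j\})\cong D_2$, then $S_i$ and $S_j$ have different sizes. To prove it, I would unwind what $D_2$ means in $\Gamma(\{S_i,S_j\})$. Arcs run only between the two ``sides'' $\{(A,S_i),(\overline A,S_i)\}$ and $\{(B,S_j),(\overline B,S_j)\}$, so the unique source of $D_2$ lies on one side, say it is $(A,S_i)$ (relabelling $A\leftrightarrow\overline A$ if necessary); its two out-neighbours are then exactly $(B,S_j)$ and $(\overline B,S_j)$, and the unique sink, receiving arcs from both of these, must be $(\overline A,S_i)$. Reading off the four arcs of $D_2$ gives $A\subsetneq B$, $A\subsetneq\overline B$, $B\subsetneq\overline A$, and $\overline B\subsetneq\overline A$ (as multisets). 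Strict multiset containment forces a strict drop in cardinality, so $|A|<|B|$ and $|A|<|\overline B|$, whence $|A|<\min\{|B|,|\overline B|\}=\mathrm{size}(S_j)$; and $|A|<|B|<|\overline A|$ gives $\mathrm{size}(S_i)=\min\{|A|,|\overline A|\}=|A|$. Hence $\mathrm{size}(S_i)<\mathrm{size}(S_j)$, proving the claim (the case where the source lies on the $j$-side is symmetric and yields the reverse strict inequality). Since by assumption all splits of $\mathfrak S$ have the same size, no pair can realise $D_2$, so $\Gamma(\{S_i,S_j\})\cong D_1$ for every $i\ne j$, i.e.\ $\mathfrak S$ is thin. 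Corollary~\ref{cor-thin} now applies and gives exactly the asserted equivalence; in particular, since every submultiset of size at most $\Delta(\mathcal M)+2$ is compatible, $\mathfrak S$ itself is compatible.

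I expect the only real obstacle to be the bookkeeping in the $D_2$ analysis: verifying that the bipartite structure of $\Gamma(\{S_i,S_j\})$ pins down which vertices may serve as source, sink and middle, and handling the symmetric cases (source on the $i$-side versus the $j$-side, $B$ versus $\overline B$) uniformly so that the argument reduces to a single set of containments. The cardinality inequalities themselves are one line once those containments are in hand. A minor degenerate case worth a sentence is when two of the splits coincide as bipartitions but occur with distinct indices in the multiset $\mathfrak S$; there the four vertices partially collapse, but compatibility of that $2$-submultiset still forces a $D_1$- or $D_2$-configuration and the same cardinality computation rules out $D_2$ verbatim, so thinness is unaffected.
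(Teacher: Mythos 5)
Your proposal is correct and follows essentially the same route as the paper: deduce from pairwise compatibility that each $\Gamma(\{S_i,S_j\})$ is $D_1$ or $D_2$, use the equal-size hypothesis to rule out $D_2$ (the paper leaves this step implicit, asserting directly that equal size plus pairwise compatibility forces $\mathfrak S$ to be thin), and then invoke Corollary~\ref{cor-thin}. Your explicit $D_2$ analysis, showing that the source/sink structure yields $\mathrm{size}(S_i)<\mathrm{size}(S_j)$, is a welcome filling-in of that omitted detail.
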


\begin{proof}
	Suppose that every
	submultiset of $\mathfrak S$ of size at most $\Delta(\mathcal M)+2$ is compatible.
	Then every pair of splits in $\mathfrak S$ is compatible.
	Since every split in  $\mathfrak S$  has the same size it follows that $\mathfrak S$ is thin. 
	Now we can apply Corollary~\ref{cor-thin} to see that $\mathfrak S$ is compatible.
\end{proof}

For the rest of this section we consider splits systems in which
every split has size 2 or 3, which we shall call 
{\em 2,3-split systems}. In particular we will prove in Theorem~\ref{twothreeholds} that such split systems, if they only contain splits of multiplicity 1, enjoy a slightly weaker version of ($\bigstar$).

We first prove a useful technical lemma.

\begin{lemma}\label{inc-tree}
Let $\mathcal M$ be a multiset with underlying set $X$ and 
let $A_1, \ldots, A_k$, 
$k \geq 2$ be a partition of $\mathcal M$. Let $G$ be a graph whose vertex set is the set $\{A_i, 1\leq i \leq k\}$, 
such that each edge $\{A_i,A_j\}$, $i \neq j$ of $G$ satisfies 
$A_i \cap A_j \neq \emptyset$. Then, we have $\Delta(\mathcal M) \geq k-c$, where 
$c$ is the number of connected components of $G$.
\end{lemma}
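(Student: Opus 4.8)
The statement is a weighted counting inequality, so the plan is to induct on the number of edges of $G$, peeling edges off one at a time and tracking how $\Delta(\mathcal M)$ must drop. First I would dispose of the base case: if $G$ has no edges, then $c=k$, so the inequality $\Delta(\mathcal M)\geq k-c=0$ holds trivially since $\Delta(\mathcal M)\geq 0$ always. This handles in particular the case where all the $A_i$ are pairwise disjoint.

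For the inductive step, suppose $G$ has at least one edge. The key idea is that every edge $\{A_i,A_j\}$ of $G$ certifies the existence of an element $x\in A_i\cap A_j$, and since the $A_i$ partition $\mathcal M$ (so $x$ lies in $A_i$ and in $A_j$ as \emph{multiset} members), $x$ has multiplicity at least $2$ in $\mathcal M$; more precisely, if we contract the edge $\{A_i,A_j\}$ by replacing the two blocks with their multiset union $A_i\cup A_j$, the resulting family is again a partition of $\mathcal M$ into $k-1$ blocks, and $\Delta$ of the underlying multiset is unchanged — but we have consumed one ``unit'' of multiplicity in the bookkeeping. The cleanest way to run this is: pick an edge $e=\{A_i,A_j\}$, form $G'$ on the vertex set $\{A_1,\dots,A_k\}\setminus\{A_i,A_j\}\cup\{A_i\cup A_j\}$, keeping an edge between two blocks of $G'$ whenever the corresponding blocks of $G$ were adjacent and the intersection condition persists (which it does, since unions only enlarge blocks). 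Then $G'$ has $k-1$ vertices and the same number $c$ of connected components (contracting an edge never changes the component count), and applying the induction hypothesis to $G'$ gives $\Delta(\mathcal M)\geq (k-1)-c$. This is \emph{off by one} from what we want, so the induction on edge-count alone is not quite enough.

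To fix the off-by-one I would instead induct on $k$ (or on $k-c$), and at each step contract a \emph{spanning forest edge} while simultaneously crediting the multiplicity. Concretely: within each connected component of $G$ pick a spanning tree; the total number of tree edges is $k-c$. Now argue that each such tree edge $\{A_i,A_j\}$ can be associated, injectively, to an occurrence of a repeated element. The way to make the injection clean is to root each component's tree and, for each non-root block $A_j$, charge the edge from $A_j$ to its parent $A_i$ to one element of $A_i\cap A_j$, where we choose — for each element $x$ — to use its $\mathcal M(x)$ copies to pay for at most $\mathcal M(x)-1$ of these parent-edges incident to the blocks containing $x$ (since the blocks containing a fixed element $x$, together with the tree edges among them, form a subforest with at most $\mathcal M(x)-1$ edges total across all components). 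Summing $\mathcal M(x)-1$ over $x\in X$ gives exactly $\Delta(\mathcal M)$, and this bounds the number of tree edges $k-c$ from above, i.e.\ $\Delta(\mathcal M)\geq k-c$.

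\textbf{Main obstacle.} The delicate point is precisely the charging argument in the last paragraph: one must verify that for a fixed element $x$, the set of blocks $A_i$ with $x\in A_i$ (as a multiset element, counted with multiplicity) has size exactly $\mathcal M(x)$, and that the tree/forest edges we are charging to $x$ all run between such blocks, so that they form a forest on at most $\mathcal M(x)$ vertices and hence number at most $\mathcal M(x)-1$. Getting the multiset bookkeeping exactly right — an element of multiplicity $\mathcal M(x)$ is distributed among the blocks so that the multiplicities sum to $\mathcal M(x)$, and an edge $\{A_i,A_j\}$ with $x$ in the intersection uses up one copy from each endpoint — is where care is needed; once that is set up, the conclusion $\sum_x(\mathcal M(x)-1)\geq \#\{\text{forest edges}\}=k-c$ is immediate. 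Alternatively, the edge-contraction induction can be salvaged by contracting \emph{all} edges of one spanning tree of a single component at once (reducing that component of size $s$ to a single vertex, dropping $k$ by $s-1$ and $c$ by $0$ within that component while the multiset is unchanged) — but then the base case analysis must account for the multiplicity used, which again routes through the same counting.
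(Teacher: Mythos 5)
Your final argument --- take a spanning forest $F$ with $k-c$ edges, charge each forest edge $\{A_i,A_j\}$ to an element of $A_i\cap A_j$, and use the partition property together with acyclicity to bound the number of edges charged to a fixed $x$ by $\mathcal M(x)-1$, then sum over $x\in X$ --- is correct and is essentially the paper's own proof (the paper states the per-element bound as $\mathcal M(x)\geq\pi(x)+1$, where $\pi(x)$ is the number of forest edges labelled $x$). The preliminary discussion of the off-by-one in the naive edge-contraction induction, and the slight overstatement that the blocks containing $x$ number \emph{exactly} $\mathcal M(x)$ rather than at most $\mathcal M(x)$, do not affect the validity of the charging argument you settle on.
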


\begin{proof}
Let $F$ be the union of some spanning trees of each components of $G$. 
Since $F$ is acyclic, we have $|E(F)|=|V(F)|-c=k-c$, so it suffices to show that $\Delta(\mathcal M) \geq |E(F)|$.

To each edge $e=\{A,B\}$ of $F$, we can 
associate one element $p(e)$ in $A \cap B$. For $x$ an 
element of $X$, we denote by $\pi(x)$ the number of edges $e \in E(F)$ 
such that $p(e)=x$. Since $F$ is acyclic, the number of vertices $A$ of $F$ 
satisfying $x \in A$ is at least $\pi(x)+1$. Since the vertices of $F$ form 
a partition of $\mathcal M$, it follows that $m(x) \geq \pi(x)+1$. This 
implies $\Delta(\mathcal M) = \Sigma_{x \in X} (m(x)-1) \geq \Sigma_{x \in X} \pi(x)=|E(F)|$, as required.
\end{proof}

Using Lemma~\ref{inc-tree}, we now prove a key proposition.

\begin{proposition}\label{twothree}
	Suppose $\mathfrak F$ is an incompatible 2,3-split system on $\mathcal M$ containing $k \geq 3$ splits,
	in which every split has multiplicity 1
	such that all subsets of $\mathfrak F$ are compatible. Then $\Delta(\mathcal M) \geq k-3$. 
\end{proposition}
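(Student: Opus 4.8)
The plan is to combine the machinery of Proposition~\ref{prdel} with the counting principle of Lemma~\ref{inc-tree} by dealing with a single case that Proposition~\ref{prdel} does not cover: when the minimal incompatible subsystem has empty superterminal set. First I would observe that it suffices to prove the bound $\Delta(\mathcal M)\ge k-3$ when $\mathfrak F$ itself is a minimal incompatible 2,3-split system (every proper subset compatible), which is already the hypothesis. If $(\mathfrak F)_t^{\times}\neq\emptyset$, then Proposition~\ref{prdel} gives the stronger bound $\Delta(\mathcal M)\ge k-2\ge k-3$ and we are done. So the whole content lies in the case $(\mathfrak F)_t^{\times}=\emptyset$, i.e.\ no terminal set of $\mathfrak F$ is superterminal.

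The key structural input is that $\mathfrak F$, being a minimal incompatible system of size $k\ge 3$, is pairwise compatible (a 2-element subsystem is compatible), so every terminal set $A$ is a set on which the two-element tests behave; by Lemma~\ref{lmsupcore}, a terminal set $A$ with $A^*\neq\emptyset$ would be superterminal, which is excluded. Hence every terminal set $A$ of $\mathfrak F$ satisfies $A^*=\emptyset$, i.e.\ $A$ consists entirely of repeated elements. Since $\mathfrak F_t\neq\emptyset$ and every split has size $2$ or $3$, this already forces $\mathcal M$ to be a genuine multiset, and more importantly it gives us, for each terminal set $A$, at least $|A|\ge 2$ worth of multiplicity ``used up''. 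I would then fix, for each $S_i=A_i|\overline{A_i}\in\mathfrak F$, a concrete small part $P_i\in\{A_i,\overline{A_i}\}$ with $|P_i|\le 3$, and set up the bipartite-style incidence between the $k$ splits and the elements of $X$ recording, for each split, which repeated elements appear in its small part. The goal is to build a partition-and-graph configuration of the shape required by Lemma~\ref{inc-tree}: partition $\mathcal M$ according to how elements are grouped by the terminal/critical structure, put an edge between two blocks whenever they share an element coming from overlapping small parts of two splits, and argue the resulting graph is connected enough (few components) that $k-c\le\Delta(\mathcal M)$ yields $k-3\le\Delta(\mathcal M)$.

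The hard part will be the combinatorial bookkeeping that turns ``no superterminal set'' plus ``all small parts have size $\le 3$'' into a connectivity statement about the auxiliary graph of Lemma~\ref{inc-tree} with at most $3$ components. Concretely, I expect to proceed by induction on $k$ as in Proposition~\ref{prdel}: pick a terminal set $A_0$ of $\mathfrak F$ (non-empty since $\mathfrak F\neq\emptyset$), which by the above has $A_0^*=\emptyset$ and $2\le|A_0|\le 3$; form the contracted system $\mathfrak F^-$ on $\mathcal M^-$ obtained by merging the copies in $A_0$ (as in the proof of Proposition~\ref{prdel}), check that $\mathfrak F^-$ is again an incompatible 2,3-split system of size $k-1$ with all proper subsets compatible and all splits of multiplicity $1$ — here I must verify the merge does not create a split of size exceeding $3$ nor a repeated split, using minimality and the size restriction — and track the drop $\Delta(\mathcal M)-\Delta(\mathcal M^-)\ge |A_0|-1\ge 1$. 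Then either $(\mathfrak F^-)_t^{\times}\neq\emptyset$, giving $\Delta(\mathcal M^-)\ge (k-1)-2=k-3$ by Proposition~\ref{prdel} and hence $\Delta(\mathcal M)>\Delta(\mathcal M^-)\ge k-3$, or the superterminal set is again empty and we apply the induction hypothesis to get $\Delta(\mathcal M^-)\ge (k-1)-3=k-4$, whence $\Delta(\mathcal M)\ge\Delta(\mathcal M^-)+1\ge k-3$. The base case $k=3$ is handled exactly as in Proposition~\ref{prdel} (three pairwise compatible splits on a set are compatible, so $\Delta(\mathcal M)\ge 1=k-2\ge k-3$). The one genuinely delicate point is ensuring that the contraction keeps us inside the class of 2,3-split systems; if merging $A_0$ into a single element can push a size-$3$ small part down but never up, and the ``large'' parts stay large, this goes through, and that is the step I would write out most carefully.
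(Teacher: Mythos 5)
Your reduction to the case $(\mathfrak F)_t^{\times}=\emptyset$ via Proposition~\ref{prdel} matches the paper, and your instinct to bring Lemma~\ref{inc-tree} to bear on an intersection graph of small parts is also pointing in the right direction. But the argument you actually commit to for the remaining case --- induction on $k$ by contracting a terminal set $A_0$ into a single new element $a_0$, as in the proof of Proposition~\ref{prdel} --- has a gap that I do not think can be repaired. The contraction in Proposition~\ref{prdel} is only well-defined because $A_0$ is \emph{super}terminal: the map $S \mapsto S(A_0)$ requires that for every other split $B|\overline B$ exactly one of $A_0 \subsetneq B$, $A_0\subsetneq \overline B$ holds, so that all copies of the elements of $A_0$ sit together on one side of every split and can be merged into $a_0$. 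In the case you are left with, \emph{no} terminal set has this property --- that is precisely what $(\mathfrak F)_t^{\times}=\emptyset$ means --- so for any terminal $A_0$ you pick there is some split $B|\overline B$ for which none of the four containments holds, and there is no consistent way to form $\mathfrak F^-$. The genuinely delicate point is therefore not the one you flagged (keeping the contracted system inside the class of 2,3-split systems); it is that the contraction itself does not exist in the only case where you need it. The same obstruction also undercuts the branch of your induction where you would apply Proposition~\ref{prdel} to $\mathfrak F^-$.

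For comparison, the paper handles this case by a direct, non-inductive argument: it first shows that every size-3 small part of $\mathfrak F$ contains the small parts of at least two distinct 2-splits (property (*)), then removes one 2-split $S^*$ with small part $\{x,y\}$, takes an $\mathcal M$-tree representing $\mathfrak F - \{S^*\}$, uses (*) to see that all edges inducing 3-splits meet a single central vertex $v^*$ while every other vertex has degree 1 or 2, and finally applies Lemma~\ref{inc-tree} to the intersection graph of the leaf labels, showing that this graph has at most two connected components and that the two-component case is exactly compensated in the count coming from the degree-2 vertices. That bookkeeping is where the weaker bound $k-3$ (rather than $k-2$) comes from. If you want to salvage your write-up, the intersection-graph idea sketched in your second paragraph is the one to develop; the contraction induction should be dropped.
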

\begin{proof}
If $\mathfrak F_t^{\times} \neq \emptyset$, then $\Delta(\mathcal M) \geq k-2 \ge k-3$ 
holds by Proposition~\ref{prdel}. So, assume $\mathfrak F_t^{\times}= \emptyset$ holds. 
In particular, $\mathfrak F$  contains at least one 2-split and 
at least one 3-split. We begin by showing that $\mathfrak F$ enjoys the following property:

 \begin{itemize}
 	%\item[(i)] If $A|\overline A \in \mathfrak F$ is such that $|A|=2$, then there exists some $B|\overline B \in \mathfrak F$, $|B|=3$ with $A \subset B$.\\
 	%{\bf Proof:} If not then $A \in \mathfrak F_t^{\times}$. $\qed$ \\
 	\item[(*)] If $A|\overline A \in \mathfrak F$ such that $|A|=3$, then there exists some $B|\overline B, C|\overline C \in \mathfrak F$ with $B\neq C$, $|B|=|C|=2$ and $B,C \subseteq A$.
 	%{\bf Proof:} 
   %\item[(iii)] If $a \in A$, $|A|=2,3$, $A|\overline{A} \in \mathfrak F$, then $\mathcal M(x) \ge 2$.\\
    %{\bf Proof:} If $|A|=2$ and $x \in A$ is of multiplicity 1 then  $ A|\overline{A} \in  \mathfrak F_t^{\times}$, a contradiction. By (ii) it now follows that all elements in the 3-part of a 3-split have multiplicity 2 or more. $\qed$ \\
    %\item[(iv)] There does not exist a split in $\mathfrak F$ of the form $z^3|\overline{z^3}$, $z \in \mathcal M$.\\
    %{\bf Proof:}  Use (ii) and the fact that every split in $\mathfrak F$ has multiplicity 1. \qed 
 \end{itemize}
 
 To see this, first note that there exists some $B|\overline B \in \mathfrak F$ with $|B|=2$ 
 and $B \subset A$, otherwise $A \in \mathfrak F_t^{\times}$. Now, suppose
 there does not exist some  $C|\overline C$ in addition to $B|\overline B$. 
 Then since $\mathfrak F - \{B|\overline B\}$ is	compatible, there is some 
 tree representing $\mathfrak F - \{B|\overline B\}$ with a leaf 
 having label set $A$. But then $\mathfrak F$ is compatible, a contradiction.

Now, let $S^*$ be a 2-split of $\mathfrak F$ such that $A(S^*)=\{x,y\}$. Let $\mathfrak S = \mathfrak F - S^*$.
Since $\mathfrak S$ is compatible, there is some 
$\mathcal M$-tree $\mathcal T = (T,\lambda)$ which represents $\mathfrak S$.
Note that as $\mathfrak S$ has size at least 4, $T$ has a ``central vertex" $v^*$ 
that is adjacent to all edges that correspond to 3-splits of $\mathfrak S$, and 
all other vertices in $T$ have degree 1 or 2.
Let $\mathcal M'=\mathcal M-\lambda(v^*)$. Since $\mathcal M' \subseteq \mathcal M$, it 
follows that $\Delta(\mathcal M') \leq \Delta(\mathcal M)$. We next proceed to show that $\Delta(\mathcal M') \geq k-3$.

Denote by $V_0$ (\emph{resp.} $V_1$) the set of leaves of $\mathcal T$ (\emph{resp.} 
the set of vertices of degree 2 of $\mathcal T$, excluding $v^*$). We also 
denote by $\lambda(V_0)$ (\emph{resp.} $\lambda(V_1)$) the multiset union 
of the multisets $\lambda(v)$, $v \in V_0$ (\emph{resp.} $v \in V_1$). These 
sets form a partition $\lambda(V_0) \cup \lambda(V_1)$ of $\mathcal M'$. 
Note also that for all $v \in V_1$, we have $|\lambda(v)|=1$, and that $|V_0|+|V_1|=|\mathfrak S|$.

Now, let $v \in V_1$ and let $z_v$ be the unique element in $\lambda(v)$. 
If $z \notin \{x,y\}$, then by (*), there exists a leaf $l$ of $\mathcal T$ 
such that $z_v \in \lambda(l)$. Otherwise, if $z_v \in \{x,y\}$, say $z_v=x$, 
then by (*), either there exists a leaf $l$ such that $x \in \lambda(l)$, or 
the leaf $l_v$ adjacent to $v$ satisfies $y \in \lambda(l_v)$ (note that  
these two cases are not mutually exclusive). Put together, these observation 
imply that there is at most one element in $\mathcal M$ (which must be either 
$x$ or $y$) that belongs to the label of some vertices in $V_1$ but does not 
belong to the label of any vertex of $V_0$. This means 
that $\Delta(\mathcal M') = \Delta(\lambda(V_0))+|V_1|$ if there exists $l_x$ 
and $l_y$ in $V_0$ (necessarily distinct) such that $x \in \lambda(l_x)$ 
and $y \in \lambda(l_y)$, and $\Delta(\mathcal M') = \Delta(\lambda(V_0))+|V_1|-1$ otherwise.

We next focus our attention on the elements in the set $V_0$. 
We define the undirected graph $G(V_0)$ to be 
the graph with vertex set $\{\lambda(v), v \in V_0\}$,
in which two distinct sets $\lambda(u),\lambda(v), u,v \in V_0$ are 
joined by an edge if $\lambda(u) \cap \lambda(v) \neq \emptyset$. 
We claim that $G(V_0)$ has at most two connected components, and 
has two connected components only if there exists $l_x$ and $l_y$ in $V_0$ (necessarily distinct) 
such that $x \in \lambda(l_x)$ and $y \in \lambda(l_y)$.

To prove this claim, we
 first remark that since all vertices $v$ of $V_0$ are leaves of $\mathcal T$, 
 we have $\lambda(v_0)|\overline{\lambda(v_0)} \in \mathfrak S$, where $\lambda(v_0)$ is 
the smallest part of $\lambda(v_0)|\overline{\lambda(v_0)}$. Hence, $G(V_0)$ 
is a subgraph of the graph $G(\mathfrak S)$ with 
vertex set $\{A \subset \mathcal M: A \text{ is the small part of a split } S \in \mathfrak S\}$, 
and in which two sets $A$ and $B$ are joined by an edge if $A \cap B \neq \emptyset$. 
More precisely, $G(V_0)$ is obtained from $G(\mathfrak S)$ by 
removing the set $\{x,y\}$ from $V(G(\mathfrak S))$, and all sets of size 
three that do not label leaves of $\mathcal T$.

We next show that $G(\mathfrak S)$ is connected. Assume for contradiction that this is not the case. 
Then there exists a partition $\mathfrak S_1, \mathfrak S_2$ of $\mathfrak S$ 
such that for any two splits $S_1 \in \mathfrak S_1$, $S_2 \in \mathfrak S_2$, 
the smallest parts of $S_1$ and $S_2$ do not intersect. 
Since $\mathcal S_1$ and $\mathcal S_2$ are nonempty 
proper subsets of $\mathcal S$, it follows that 
there exist two $\mathcal M$-trees $\mathcal T_1=(T_1,\lambda_1)$, $\mathcal T_2=(T_2,\lambda_2)$ 
representing $\mathfrak S_1$ and $\mathfrak S_2$, respectively. 
Moreover, there exist two vertices $v_1 \in V(T_1)$, $v_2 \in V(T_2)$ 
such that for $i,j \in \{1,2\}$ distinct, 
$\bigcup_{v \in V(T_i)-\{v_i\}} \lambda_i(v) \subseteq \lambda_j(v_j)$. 
By identifying vertices $v_1$ and $v_2$, and 
defining the label of the newly created vertex as $\lambda_1(v_1) \cap \lambda_2(v_2)$, 
we obtain a representation of $\mathfrak S$, which is 
impossible since $\mathfrak S$ is not compatible. Hence, $G(\mathfrak S)$ is connected.

To complete the proof of the claim, we now further consider the 
relationship between $G(V_0)$ and $G(\mathfrak S)$. 
First, let $G^-$ be the graph obtained from $G(\mathfrak S)$ by 
removing all sets of size three that are not of the 
form $\lambda(v_0), v_0 \in V_0$. In view of (*), removing 
sets of size three from the vertex set of $G(\mathfrak S)$ 
does not modify the number of connected components 
of the resulting graph, so $G^-$ is connected. Recall 
that $G(V_0)$ is obtained from $G^-$ by removing 
the vertex $\{x,y\}$. In particular $V_0=V(G^-)-\{\{x,y\}\}$. 
By definition, each set adjacent to $\{x,y\}$ in $G^-$ contains 
exactly one of $x$ or $y$. Note that no vertex in $G(V_0)$ 
contains both $x$ and $y$, as otherwise there 
exists a leaf $l$ of $\mathcal T$ with $\{x,y\} \subsetneq \lambda(l)$, 
which is impossible since $\mathfrak S$ is not compatible. 
Since all vertices of $G(V_0)$ containing $x$ (\emph{resp.} 
all vertices of $G(V_0)$ containing $y$) form a clique in $G(V_0)$, it
follows that $G(V_0)$ has at most two connected components. 
Moreover, if $G(V_0)$ has two connected components, then 
there exists at least one vertex adjacent to $\{x,y\}$ containing $x$, 
and one vertex adjacent to $\{x,y\}$ in $G^-$ containing $y$. 
This means that there exists two leaves $l_x$, $l_y$ of $\mathcal T$ such 
that $\lambda(l_x)$ contains $x$ and $\lambda(l_y)$ contains $y$, which 
completes the proof of the claim.

To conclude the proof of the theorem 
we distinguish between the cases where $G(V_0)$ has one 
and two connected components.

If $G(V_0)$ has one connected component, then it follows by Lemma~\ref{inc-tree} 
that $\Delta(\lambda(V_0)) \geq |V_0|-1$. Since as stated above, 
we have $\Delta(\mathcal M') \geq \Delta(\lambda(V_0))+|V_1|-1$, it 
follows that $\Delta(\mathcal M') \geq |V_0|+|V_1|-2=|\mathfrak S|-2=k-3$, 
which concludes the proof of the theorem. 

If $G(V_0)$ has two connected components, then it follows by Lemma~\ref{inc-tree}
that $\Delta(\lambda(V_0)) \geq = |V_0|-2$. Moreover, the fact that $G(V_0)$ 
has two connected components also implies that there exists $l_x$ and $l_y$ in $V_0$ 
such that $x \in \lambda(l_x)$ and $y \in \lambda(l_y)$. As stated above, 
this in turn implies that $\Delta(\mathcal M') = \Delta(\lambda(V_0))+|V_1|$. 
Putting these two relationships together, we get 
$\Delta(\mathcal M') \geq |V_0|+|V_1|-2=|\mathfrak S|-2=k-3$, which concludes the 
proof of the theorem.
\end{proof}

We now prove the main result of this section.

\begin{theorem}\label{twothreeholds}
	Suppose that $\mathfrak S$ is a 2,3-split system on a multiset $\mathcal M$ in which every split has multiplicity 1. 
	Then $\mathfrak S$ is compatible if and only if every
	submultiset of $\mathfrak S$ of size at most $\Delta(\mathcal M)+3$ is compatible.
\end{theorem}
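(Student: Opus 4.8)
\textbf{Proof proposal for Theorem~\ref{twothreeholds}.}

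The plan is to deduce this theorem from Proposition~\ref{twothree} by the same minimal-counterexample strategy used in the proof of Theorem~\ref{tmd2}. One direction (``only if'') is trivial: if $\mathfrak S$ is compatible then every submultiset of it is compatible, in particular every submultiset of size at most $\Delta(\mathcal M)+3$. For the other direction, I would argue by contradiction. Suppose every submultiset of $\mathfrak S$ of size at most $\Delta(\mathcal M)+3$ is compatible, but $\mathfrak S$ itself is not compatible. Then I choose a submultiset $\mathfrak F \subseteq \mathfrak S$ that is incompatible and minimal for inclusion with this property, so that every proper subset of $\mathfrak F$ is compatible. Write $k = |\mathfrak F|$. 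By the choice of $\mathfrak S$, since $\mathfrak F$ is incompatible we must have $k \geq \Delta(\mathcal M)+4$, and in particular $k \geq 4 > 3$ (note that, since $\mathcal M$ is a set, $k\geq 2$ already forces $\Delta(\mathcal M)\geq 1$ via the fact that two pairwise compatible splits of a set are compatible, but the clean inequality $k\geq\Delta(\mathcal M)+4\geq 4$ suffices).

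Next I would check that $\mathfrak F$ satisfies the hypotheses of Proposition~\ref{twothree}. It is a $2,3$-split system since $\mathfrak F \subseteq \mathfrak S$ and every split of $\mathfrak S$ has size $2$ or $3$; every split of $\mathfrak F$ has multiplicity $1$ in $\mathfrak F$, again by inheritance from $\mathfrak S$ (and note that a repeated split would make the pair $\{S,S\}\subseteq\mathfrak F$ incompatible, contradicting minimality unless $k=2$, which is excluded); it is incompatible and has size $k \geq 3$; and all proper subsets of $\mathfrak F$ are compatible by minimality. Hence Proposition~\ref{twothree} applies and gives $\Delta(\mathcal M) \geq k - 3$, i.e.\ $k \leq \Delta(\mathcal M)+3$. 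This contradicts $k \geq \Delta(\mathcal M)+4$. Therefore $\mathfrak S$ is compatible, completing the proof.

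I do not expect any genuine obstacle here: the substantive content is entirely inside Proposition~\ref{twothree}, and the theorem is the standard ``globalization'' of that local bound. The only points that need a line of care are (i) confirming that the splits of $\mathfrak F$ retain multiplicity~$1$ so that Proposition~\ref{twothree} is applicable --- this is immediate since $\mathfrak S$ has all multiplicities $1$ --- and (ii) making sure the size bound $k \geq \Delta(\mathcal M)+4$ is strict enough to contradict $k \leq \Delta(\mathcal M)+3$; it is, since $\Delta(\mathcal M)+3 < \Delta(\mathcal M)+4$. If one wanted the cleanest possible write-up, one could also observe at the start that every submultiset of $\mathfrak S$ is automatically a $2,3$-split system with all multiplicities $1$, so the minimal incompatible $\mathfrak F$ inherits exactly the setup of Proposition~\ref{twothree} with nothing to verify beyond its size.
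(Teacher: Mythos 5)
Your proof is correct and follows essentially the same route as the paper: take a minimal incompatible subsystem $\mathfrak F$ of size $k>\Delta(\mathcal M)+3$, note it inherits the 2,3-size and multiplicity-1 hypotheses, and apply Proposition~\ref{twothree} to get the contradictory bound $\Delta(\mathcal M)\ge k-3$. The only blemish is the parenthetical aside claiming ``$\mathcal M$ is a set'' (it is a multiset here), but since you explicitly discard that remark and rely on $k\ge\Delta(\mathcal M)+4\ge 4$, it does not affect the argument.
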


\begin{proof}
The `only if' direction is trivial. To see the `if' direction, assume for contradiction that $\mathfrak S$ is 
not compatible. Let $\mathfrak S'$ be an incompatible subset of $\mathfrak S$ that 
is minimal for inclusion, that is, all proper subsets of $\mathfrak S'$ are compatible.
Denoting by $k$ the size of $\mathfrak S'$, we have by assumption that $k > \Delta(\mathcal M)+3$. 
On the other hand, Proposition~\ref{twothree} applied 
to $\mathfrak S'$ implies that $\Delta(\mathcal M) \geq k-3$, a contradiction.
\end{proof}

\begin{remark}
There are examples of 2,3-split systems $\mathfrak F$ 
that satisfy the conditions of Proposition~\ref{twothree} 
and such that $\mathfrak F_t^{\times} = \emptyset$ 
(and so Proposition~\ref{prdel} does not apply). 
For example, take
$$
\mathfrak F=\{ab|\overline{ab},ac|\overline{ac},bc|\overline{bc},
cd|\overline{cd},ce|\overline{ce},de|\overline{de},abc|\overline{abc},cde|\overline{cde}\}
$$
on $\mathcal M=\{a^2,b^2,c^5,d^2,e^2\}$. Note that in this
example, $8=\Delta(\mathcal M)>|\mathfrak F|-3=5$.
\end{remark}

\subsection*{Acknowledgements}

GES would like to thank the streets of Norwich and the seaside town 
of Sheringham for inspirational autumn walks, during which ideas for some of 
the proofs arose. Both authors thank Katharina Huber for helpful
discussions on compatibility.

%%%%%%%%%%%%%%%%%%

\bibliographystyle{plain}
\bibliography{bibliography}

\end{document}